\newtheorem{thm}{Theorem}[section]
\newtheorem{prop}[thm]{Proposition}
\newtheorem{cor}[thm]{Corollary}
\newtheorem{lem}[thm]{Lemma}
\theoremstyle{definition}
\newtheorem{ex}[thm]{Example}
\theoremstyle{remark}
\newtheorem{rem}[thm]{Remark}
\newcommand{\RR}{\mathbb R}
\newcommand{\ZZ}{\mathbb Z}
\newcommand{\QQ}{\mathbb Q}
\newcommand{\rk}{\mathrm{rank}\, }
\newcommand{\Ad}{\operatorname{Ad}}
\newcommand{\mfk}{\mathfrak{k}}
\newcommand{\mfg}{\mathfrak{g}}
\newcommand{\mfh}{\mathfrak{h}}
\newcommand{\mft}{\mathfrak{t}}
\newcommand{\depth}{\operatorname{depth}}
\newcommand{\Mmax}{M_{\max}}
\newcommand{\h}{\mathfrak{h}}
\begin{document}
\address[O.~Goertsches]{Fachbereich Mathematik\\  Universit\"at Hamburg\\  Germany}
\address[A.-L.~Mare]{Department of Mathematics and Statistics\\ University of Regina\\ Canada}

\email[]{oliver.goertsches@math.uni-hamburg.de}

\email[]{mareal@math.uregina.ca}

\title{Equivariant cohomology of cohomogeneity one actions }

\author{Oliver Goertsches}
\author{Augustin-Liviu Mare}

\begin{abstract} We show that if $G\times M \to M$ is a cohomogeneity one action of a compact connected Lie group
$G$  on a compact connected manifold $M$ then $H^*_G(M)$ is a Cohen-Macaulay module over $H^*(BG)$.
Moreover, this module is free if and only if the rank of at least one isotropy group is equal to $\rk G$. 
We deduce as corollaries several results concerning the usual (de Rham) cohomology of $M$, such as the following obstruction to the existence of a cohomogeneity one action: if $M$ admits a cohomogeneity one action, then $\chi(M)>0$ if and only if 
 $H^{\rm odd}(M)=\{0\}$.
\end{abstract}
\maketitle
%\tableofcontents

\section{Introduction}\label{one}
Let $G$ be a compact connected Lie group which acts on a compact connected manifold $M$,
the cohomogeneity of the action being equal to one; this means that there exists a $G$-orbit whose
codimension in $M$ is equal to one.  
For such group actions, we investigate the corresponding equivariant cohomology  $H^*_G(M)$ (the coefficient ring
will always be $\RR$). 
We are especially interested in the natural $H^*(BG)$-module structure of this space.
The first natural question concerning this module is whether it is free, in other words, whether the $G$-action is
equivariantly formal. One can easily find examples  which show that the answer is in general negative.
Instead of being free, we may also wonder whether the above-mentioned module satisfies the (weaker) requirement
of being Cohen-Macaulay. It turns out that the answer is in our context always positive: this is the main
result of our paper. Before stating it, we mention that the relevance of the Cohen-Macaulay condition in equivariant
cohomology was for the first time noticed by Bredon  \cite{Bredon}, inspired by Atiyah \cite{At}, who had previously 
used this notion in
equivariant $K$-theory. It has also attracted attention in the theory of equivariant cohomology of finite group actions, see e.g.~\cite{Duflot}. More recently, group actions whose equivariant cohomology satisfies this requirement have been investigated in  \cite{FranzPuppe2003}, \cite{GT'}, and \cite{GR}. We adopt the terminology already used in those papers: 
 if a group $G$ acts on a space $M$ in such a way
 that $H^*_G(M)$ is a Cohen-Macaulay $H^*(BG)$-module, we simply say that the $G$-action is Cohen-Macaulay.
  
 \begin{thm}\label{thm:cohom1eqformal}
Any cohomogeneity one action of a compact connected Lie group on a compact connected manifold is Cohen-Macaulay.  
\end{thm}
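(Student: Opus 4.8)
The plan is to reduce to the classical structure theory of cohomogeneity one actions, according to which the orbit space $M/G$ is a one-dimensional manifold, hence either $S^1$ or the interval $[0,1]$. Suppose first $M/G\cong S^1$. Then every orbit is principal of some type $G/H$ and $M$ is a fibre bundle over $S^1$ with fibre $G/H$; applying the Borel construction fibrewise presents $M_G$ as a $BH$-bundle over $S^1$, i.e.\ the mapping torus of a monodromy self-map $\phi$ of $BH$. The corresponding Wang sequence
\[
\cdots\to H^*_G(M)\to H^*(BH)\xrightarrow{\ \phi^*-\id\ }H^*(BH)\to H^{*+1}_G(M)\to\cdots
\]
is an exact sequence of $H^*(BG)$-modules. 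Since the monodromy acts on $H^*(BH)$ through the finite component group of $N(H)/H$, the operator $\phi^*$ is semisimple over $\RR$, so $H^*(BH)$ splits as $\ker(\phi^*-\id)\oplus\im(\phi^*-\id)$ and $H^*_G(M)$ becomes an extension of the invariant subalgebra $H^*(BH)^{\phi}$ by a degree-shifted copy of itself. As the latter is a Cohen--Macaulay $H^*(BG)$-module of dimension $\rk H=\dim H^*_G(M)$, the depth lemma yields the assertion in this case.

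The essential case is $M/G\cong[0,1]$, where $\Ms$ is the union of two singular orbits $G/K_-$, $G/K_+$ and $M$ is covered by the two invariant disc bundles retracting onto them, with overlap retracting onto a principal orbit $G/H$. The Mayer--Vietoris sequence becomes
\[
\cdots\to H^{*-1}(BH)\to H^*_G(M)\to H^*(BK_-)\oplus H^*(BK_+)\xrightarrow{\ \rho\ }H^*(BH)\to\cdots,\qquad \rho(a,b)=a|_H-b|_H .
\]
The three outer modules are equivariant cohomologies of homogeneous spaces, hence Cohen--Macaulay $H^*(BG)$-modules of dimensions $\rk K_-$, $\rk K_+$ and $\rk H$; moreover $\dim H^*_G(M)$ equals the maximal rank $d:=\max(\rk K_-,\rk K_+)$ of an isotropy group. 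The whole problem is thus to locate the depth of $H^*_G(M)$ inside this exact sequence.

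The decisive input is the structure of each restriction $r_\pm\colon H^*(BK_\pm)\to H^*(BH)$, which I would read off from the Serre spectral sequence of the slice-sphere fibration $K_\pm/H=S^{l_\pm}\hookrightarrow BH\to BK_\pm$. Over $\RR$ this gives a clean dichotomy: if $l_\pm$ is odd then $r_\pm$ is surjective with kernel generated by one Euler class and $\rk K_\pm=\rk H+1$, while if $l_\pm$ is even then $r_\pm$ is injective and makes $H^*(BH)$ free of rank two over its image, so $\rk K_\pm=\rk H$. In every case both $\im r_\pm$ and $\operatorname{coker}r_\pm$ are Cohen--Macaulay $H^*(BG)$-modules of dimension $\rk H$. (A short additional argument passing to identity components and taking invariants under the finite component groups covers non-connected isotropy.) When the two slice spheres have the same odd dimension, $\rho$ is surjective, the middle term is Cohen--Macaulay of dimension $d=\rk H+1$, and the depth lemma applied to the resulting short exact sequence gives $\depth H^*_G(M)=d$ at once.

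The main obstacle is the remaining cases, where $\operatorname{coker}\rho$ is nonzero; the representative difficulty is the mixed case $l_-$ odd, $l_+$ even, in which the middle term fails to be Cohen--Macaulay because $\rk K_-=\rk H+1\neq\rk H=\rk K_+$. Here $r_-$ is surjective, so $\rho$ is surjective and $H^*_G(M)=\ker\rho$ fits in a short exact sequence; passing to the long exact sequence of local cohomology with respect to the irrelevant ideal $\mfm\subset H^*(BG)$, all groups below degree $\rk H+1$ vanish except possibly the kernel of the connecting map out of $H^{\rk H}_{\mfm}(H^*(BK_+))$. I would show this connecting map is injective using that $\operatorname{coker}r_+$ is Cohen--Macaulay of dimension $\rk H$: the local cohomology sequence of $0\to H^*(BK_+)\to H^*(BH)\to\operatorname{coker}r_+\to 0$ forces $H^{\rk H-1}_{\mfm}(\operatorname{coker}r_+)=0$, whence $r_{+*}$ is injective on top local cohomology. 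The even--even case is handled by the same mechanism, the extra point, which I would extract from the constrained group diagram $H\subset\{K_-,K_+\}\subset G$, being that the two images $\im r_\pm$ are nested, so that $\operatorname{coker}\rho$ is again Cohen--Macaulay of full dimension. In all cases $H^i_{\mfm}(H^*_G(M))=0$ for $i<d$, i.e.\ $\depth H^*_G(M)=d=\dim H^*_G(M)$, which is the theorem. Finally the freeness criterion is immediate: a Cohen--Macaulay module over the regular ring $H^*(BG)$ is free exactly when its dimension equals $\rk G$, that is, exactly when some isotropy group has rank $\rk G$.
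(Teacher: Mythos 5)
Your overall architecture parallels the paper's: split on $M/G\cong S^1$ versus $[0,1]$, use the Mayer--Vietoris sequence of the two tubular neighborhoods, feed in the Cohen--Macaulayness of $H^*(BK^\pm)$ and $H^*(BH)$ together with the rank/surjectivity--injectivity dichotomy for the slice spheres (your Gysin analysis is exactly Proposition \ref{samelson}), and finish with depth bookkeeping. The ``both odd'' case coincides with the paper's Case 1, and your mixed case is a local-cohomology reformulation of the paper's Case 2 (the paper instead splits off the pair $(M,G/K^-)$, identifies $H^*_G(M,G/K^-)$ with $H^*_{K^+}(D^+,S^+)$ by excision, and proves that this is Cohen--Macaulay of dimension $\rk H$ via the Thom isomorphism for $K^+_0$ followed by averaging over $K^+/K^+_0$). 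That averaging step is exactly the ``short additional argument'' you defer: note that after passing to component-group invariants your assertion that $H^*(BH)$ is \emph{free of rank two} over $H^*(BK^+)$ is not automatic (the component group may act nontrivially, e.g.\ reversing orientation of the slice sphere); what survives, and what your local-cohomology step actually needs, is only that $\operatorname{coker}(r_+)$ is Cohen--Macaulay of dimension $\rk H$, which is precisely what the paper's direct-summand argument delivers.

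The genuine gap is the even--even case. Your claim that the images $\im r_-$ and $\im r_+$ are nested is unjustified and false in general: with $T\subset H$ a maximal torus, $\im r_\pm$ is essentially $S(\mft^*)^{W(K^\pm)}$ sitting inside $S(\mft^*)^{W(H)}$, and for two different intermediate groups (say two reflection subgroups generated by distinct reflections, as happens already for $K^\pm/H=S^0$) neither invariant ring contains the other. So $\operatorname{coker}\rho$ is not identified with $\operatorname{coker}(r_\pm)$, and your argument that it is Cohen--Macaulay of full dimension does not go through as written; since everything in this case reduces to that single statement (it forces $\im\rho$ and then $\ker\rho$ to be Cohen--Macaulay via Lemma \ref{ineq}), the case is left open. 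The efficient repair is to notice that in the even--even case \emph{all} isotropy groups have rank $\rk H$, so the action is equal-rank and Cohen--Macaulayness follows from the same result you already invoke for the homogeneous pieces (\cite[Corollary 4.3]{GR}); this is also how the paper disposes of it, and likewise of your $M/G\cong S^1$ case, where your Wang-sequence argument is correct but needs the standard inequality $\depth B\geq\min\{\depth A,\depth C\}$ for $0\to A\to B\to C\to 0$, which is not among the inequalities listed in Lemma \ref{ineq}.
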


Concretely, if the group action is $G\times M \to M$, then the (Krull) dimension and the depth of $H^*_G(M)$ over
$H^*(BG)$ are equal. In fact, we can  say exactly what the value of these two numbers is: the highest
rank of a $G$-isotropy group.

To put our theorem into perspective, we mention the following result, which has been proved in \cite{GR}:
an action of a compact connected Lie group on a compact manifold with the property that all isotropy groups
have the same rank is Cohen-Macaulay. %(see \cite[Section 4]{Duflot} for a somehow related result). 
Consequently, if the $G$-action is  transitive, then it is Cohen-Macaulay (see also Proposition \ref{lem:cohomrank} 
and Remark \ref{assertion} below). We deduce:

\begin{cor}\label{<2}
Any action of a compact connected Lie group on a compact connected manifold whose cohomogeneity is zero or
 one is Cohen-Macaulay.  
\end{cor}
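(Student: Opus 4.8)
The plan is to dispose of the two cases separately, since each of them is already covered by a result at our disposal. The cohomogeneity one case is literally the content of Theorem \ref{thm:cohom1eqformal}, so no additional argument is required there. It therefore remains only to treat the cohomogeneity zero case.

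For cohomogeneity zero, the first step is to observe that such an action is automatically transitive, so that $M$ is a homogeneous space $G/H$. Indeed, by definition there is an orbit of codimension zero in $M$, which is therefore open; but every $G$-orbit is also compact (as $G$ is compact), hence closed; and a nonempty subset of the connected manifold $M$ that is simultaneously open and closed must be all of $M$. Thus $M = G/H$ for some closed subgroup $H \leq G$. The second step is then to invoke the result of \cite{GR} recalled in the introduction: any action all of whose isotropy groups have equal rank is Cohen-Macaulay. For a transitive action all isotropy groups are conjugate, and so in particular of the same rank, so this applies verbatim. Alternatively, one can argue directly from $H^*_G(G/H) \cong H^*(BH)$ as $H^*(BG)$-modules together with Proposition \ref{lem:cohomrank} and Remark \ref{assertion}.

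I do not expect a genuine obstacle in this corollary: it is a purely formal combination of Theorem \ref{thm:cohom1eqformal} with an already established special case, and all of the substantive work sits inside the proof of that theorem. The only point that deserves a word of care is the reduction of the cohomogeneity zero hypothesis to actual transitivity, where both the compactness of $G$ and the connectedness of $M$ are genuinely used, as indicated above.
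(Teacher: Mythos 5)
Your proposal is correct and follows essentially the same route as the paper: the cohomogeneity one case is Theorem \ref{thm:cohom1eqformal}, and the cohomogeneity zero case reduces to a transitive action, which is Cohen-Macaulay by the equal-rank result of \cite{GR} (equivalently, by Proposition \ref{lem:cohomrank}). The only addition is that you spell out the open-and-closed orbit argument showing that cohomogeneity zero implies transitivity, which the paper leaves implicit.
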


We also note that actions of cohomogeneity two or larger are not necessarily Cohen-Macaulay: examples already appear in the classification of $T^2$-actions on $4$-manifolds by Orlik and Raymond \cite{Or-Ra}, see Example
\ref{exor} in this paper.

In general, a group action is equivariantly formal
if and only if it is  Cohen-Macaulay and the rank of at least one isotropy group is maximal, i.e.~equal to the rank of
the acting group (cf.~\cite{GR}, see also
Proposition \ref{gr} below). This immediately implies the following characterization of equivariant formality 
for cohomogeneity one actions:  

\begin{cor}\label{cor:eqf} A cohomogeneity one action of a compact connected Lie group on a compact connected manifold is equivariantly formal if and only if the  rank of at least one isotropy group is maximal.
\end{cor}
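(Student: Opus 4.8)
The plan is to deduce the statement directly from Theorem \ref{thm:cohom1eqformal} together with the general characterization of equivariant formality recalled above as Proposition \ref{gr}. The substantive geometric and algebraic work has already been carried out in establishing those two results; what remains for this corollary is a purely formal combination of them, so I expect the argument to be very short.

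First I would recall the content of Proposition \ref{gr}: an action of a compact connected Lie group $G$ on a space $M$ is equivariantly formal if and only if it is Cohen-Macaulay \emph{and} the rank of at least one isotropy group is equal to $\rk G$. This separates the equivariant formality condition into two logically independent requirements, one of which is the Cohen-Macaulay property of the $H^*(BG)$-module $H^*_G(M)$. Next I would observe that, for a cohomogeneity one action, the first of these requirements is automatic: by Theorem \ref{thm:cohom1eqformal} every such action is Cohen-Macaulay. Substituting this fact into the biconditional of Proposition \ref{gr}, the Cohen-Macaulay clause is always satisfied and may be dropped, so equivariant formality becomes equivalent to the single surviving condition that the rank of at least one isotropy group be maximal.

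Since both ingredients are already in hand, there is no genuine obstacle in this deduction; the only point deserving a moment of care is to verify that the hypotheses of Proposition \ref{gr} and of Theorem \ref{thm:cohom1eqformal} match precisely — namely that $G$ is compact and connected and $M$ is compact and connected — so that the two statements apply to one and the same action without any extra assumptions. Once this compatibility of hypotheses is noted, the corollary follows at once.
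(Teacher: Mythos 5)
Your proposal is correct and is exactly the paper's argument: the corollary is obtained by combining Theorem \ref{thm:cohom1eqformal} with Proposition \ref{gr}, noting that the condition $\Mmax\neq\emptyset$ is precisely the statement that some isotropy group has rank equal to $\rk G$. Nothing further is needed.
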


%To illustrate this criterion by an example, we consider  the $SO(2)\times SO(n)$-action on the Brieskorn variety $W^{2n-1}(d)$, see \cite{HH}. 
%It has cohomogeneity one, thus it is 
% Cohen-Macaulay; however, by the above corollary and the description of the %isotropy groups given in \cite{GVWZ}, this action  is not equivariantly formal.

Corollary \ref{cor:eqf} shows that
the cohomogeneity one action $G\times M \to M$ is equivariantly formal whenever $M$ satisfies the 
purely topological condition $\chi(M)>0$
 (indeed, it is known that this inequality implies   the condition on the rank of the isotropy groups in Corollary \ref{cor:eqf}). Extensive lists of cohomogeneity one actions on manifolds with positive Euler characteristic can be found 
for instance  in \cite{AlekseevskyPodesta} and \cite{Fr}.
The above observation will be used to obtain  the following obstruction to the existence of a group action on $M$ of
cohomogeneity zero or one.
\begin{cor}\label{compact} If a compact manifold $M$ admits a cohomogeneity one action of a compact Lie group, then we have $\chi(M)>0$ if and only if $H^{\rm odd}(M)=\{0\}$.
\end{cor}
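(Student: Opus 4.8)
The plan is to prove the two implications separately; the backward one is elementary, and the forward one is where the cohomogeneity one hypothesis does the work. For the implication $H^{\rm odd}(M)=\{0\}\Rightarrow \chi(M)>0$ I would not use the action at all: if the odd Betti numbers vanish, then $\chi(M)=\sum_i \dim H^{2i}(M)\ge \dim H^0(M)=1$, because $M$ is compact and connected.

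For the converse, assume $\chi(M)>0$. I would first reduce to $G$ connected by replacing it with its identity component, which leaves the cohomogeneity unchanged since orbit dimensions depend only on the infinitesimal action. As recorded in the paragraph following Corollary \ref{cor:eqf}, the hypothesis $\chi(M)>0$ forces some isotropy group to have maximal rank, so by Corollary \ref{cor:eqf} the action is equivariantly formal; fixing a maximal torus $T\subseteq G$, the restricted $T$-action is then equivariantly formal as well. At this point I would invoke two standard facts about a torus acting on the compact manifold $M$: first, $\chi(M)=\chi(M^T)$; second, equivariant formality of the $T$-action is equivalent to the equality $\dim_\RR H^*(M)=\dim_\RR H^*(M^T)$.

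The heart of the argument is to show that $M^T$ is a finite set of points, and this is where I would use the orbit structure. Since $\chi(M)>0$, the orbit space cannot be $S^1$ (otherwise $M$ fibers over $S^1$ with fiber a principal orbit, forcing $\chi(M)=0$), so $M/G=[0,1]$, with singular orbits $G/K^-,G/K^+$ and principal orbit $G/H$. A point of $M$ can be $T$-fixed only if some conjugate of $T$ lies in its isotropy group, which forces that group to have maximal rank; hence if $\rk H<\rk G$ the interior contributes nothing and $M^T=(G/K^-)^T\sqcup(G/K^+)^T$ is finite, because $(G/L)^T$ is finite whenever $\rk L=\rk G$. To rule out the remaining case $\rk H=\rk G$, I would combine the Mayer--Vietoris identity $\chi(M)=\chi(G/K^-)+\chi(G/K^+)-\chi(G/H)$ coming from the two tubular neighbourhoods with the fibration multiplicativity $\chi(G/H)=\chi(G/K^\pm)\,\chi(K^\pm/H)$ and the fact that an equal-rank homogeneous sphere $K^\pm/H$ is even-dimensional with $\chi(K^\pm/H)=2$; this yields $\chi(M)=0$, contradicting $\chi(M)>0$. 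With $M^T$ finite one has $\chi(M^T)=\#M^T=\dim_\RR H^*(M^T)$, so the two facts above give $\chi(M)=\chi(M^T)=\dim_\RR H^*(M^T)=\dim_\RR H^*(M)$; since $\dim_\RR H^*(M)-\chi(M)=2\dim_\RR H^{\rm odd}(M)$, this forces $H^{\rm odd}(M)=\{0\}$.

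The step I expect to be the main obstacle is the geometric claim that $M^T$ is finite: it is exactly here that the cohomogeneity one hypothesis enters in an essential way, since equivariant formality by itself does not make the odd cohomology vanish. Eliminating the equal-rank case $\rk H=\rk G$ through the Euler characteristic computation is the delicate point. The remaining ingredients---the reduction from $G$ to $T$, the identity $\chi(M)=\chi(M^T)$, and the characterisation of equivariant formality through total Betti numbers---are standard and should simply be cited.
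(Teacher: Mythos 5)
Your proposal is correct and follows essentially the same route as the paper's proof (Proposition \ref{cor:ifm}): deduce equivariant formality from $\chi(M)>0$ via Corollary \ref{cor:eqf} and the known rank condition, observe that $M^T$ is finite because only the singular orbits can carry $T$-fixed points, and then conclude $H^{\rm odd}(M)=\{0\}$ from equivariant formality with finite fixed-point set. The only cosmetic differences are that you exclude $\rk H=\rk G$ by a fibration Euler-characteristic computation (the paper gets $\rk H=\rk G-1$ from the even-dimensionality of $M$; one could also just note that odd-dimensional closed manifolds have $\chi=0$) and that you finish by Betti-number bookkeeping where the paper invokes the localization argument of Proposition \ref{wealso}, which is equivalent via Proposition \ref{abo}(e).
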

This topic is addressed in Subsection \ref{subsec:ec} below. We also mention that, if $M$ is as in
 Corollary \ref{compact}, then $\chi(M)>0$ implies that $\pi_1(M)$ is finite, see Lemma \ref{pi1finite}. By classical results of Hopf and Samelson \cite{Ho-Sa}, respectively Borel \cite{Bo},
 the fact that $\chi(M)>0$ implies both $H^{\rm odd}(M)=\{0\}$ and the finiteness of $\pi_1(M)$ holds true also in the case when $M$ admits an action 
of a compact Lie group which is transitive, i.e.~of cohomogeneity equal to zero. 
This shows, for example, that there is no compact connected Lie group action with cohomogeneity zero or one on a compact manifold with the rational homology type of the connected sum $(S^1\times S^3)\# (S^2 \times S^2)$. 
However, the 4-manifold $R(1,0)$ of Orlik and Raymond \cite{Or-Ra} mentioned in Example \ref{exor}  is homeomorphic to this connected sum
and has a $T^2$-action of cohomogeneity two. Thus, the equivalence of $\chi(M)>0$ and $H^{\rm odd}(M)=\{0\}$ 
holds no longer for group actions with cohomogeneity greater than one. 

It should be noted that Corollary \ref{compact} is not a new result, as it follows also from a result of Grove and Halperin \cite{GH} about the rational homotopy of cohomogeneity one actions, see Remark \ref{rii} below. %Having proven Corollary \ref{compact} in this way, one also directly obtains Corollary \ref{cor:eqf} in the case of an even-dimensional $M$ with $M/G=[0,1]$.

The situation when $M$ is odd-dimensional is discussed in Subsection \ref{subsec:misod}.
In this case, equivariant formality is equivalent to $\rk H = \rk G$, where $H$ denotes a regular isotropy of the $G$-action.
We obtain a relation involving $\dim H^*(M)$,   the Euler characteristic of $G/H$ and 
the Weyl group $W$ of the cohomogeneity one action: see Corollary \ref{cor:oddd}. 
This will enable us to obtain some results for cohomogeneity one actions
on odd-dimensional rational homology spheres.

Finally, in Subsection \ref{torsfre} we show that for a cohomogeneity one action
$G\times M \to M$, the $H^*(BG)$-module $H^*_G(M)$ is torsion free if and only if it
is free. We note that the latter equivalence is in general not true for arbitrary group actions:
this topic is investigated in  \cite{Al} and \cite{Fr-Pu2}. 

\noindent{\bf Acknowledgement.} We would like to thank Wolfgang Ziller for some valuable comments
on a previous version of this paper.

\section{Equivariantly formal and Cohen-Macaulay group actions}\label{sec:lifting}

Let $G\times M \to M$ be a differentiable group action, where both the Lie group $G$ and the
manifold $M$ are compact. The equivariant cohomology ring is defined in the usual way,
by using the classifying principal $G$ bundle $EG\to BG$, as follows:
$H^*_G(M)=H^*(EG \times_G M)$. It has a canonical structure of $H^*(BG)$-algebra, induced 
by the ring homomorphism $\pi^*:H^*(BG)\to H^*_G(M)$, where $\pi : EG \times_G M \to BG$ is the canonical map 
(cf.~e.g.~ \cite[Ch.~III]{Hs} or \cite[Appendix C]{GGK}).
We say that the group action is {\it equivariantly formal} if $H^*_G(M)$  regarded as an 
$H^*(BG)$-module is free. 

There is also a relative notion of equivariant cohomology: if $N$ is a $G$-invariant submanifold of $M$, then we define $H^*_G(M,N)=H^*(EG\times_G M, EG\times_G N)$. This cohomology carries again an $H^*(BG)$-module structure, this time induced by the map $H^*(BG)\to H^*_G(M)$ and the cup product $H^*_G(M)\times H^*_G(M,N)\to H^*_G(M,N)$; see \cite[p.~178]{tomDieck}. 

\subsection{Criteria for equivariant formality}
The following result is known. We state it for future reference and sketch a proof for the reader's convenience. 
It also involves $T$, which is a maximal torus of $ G$.  

\begin{prop}\label{abo} If a compact connected Lie group $G$ acts on a compact manifold $M$, then the following statements are equivalent:

(a) The $G$-action on $M$ is equivariantly formal.

(b) We have 
$H^*_G(M) \simeq H^*(M)\otimes H^*(BG)$ by an 
isomorphism of
$H^*(BG)$-modules.

(c) The  homomorphism $i^*: H^*_G(M)\to H^*(M)$ induced by the canonical inclusion
$i: M \to EG\times_G M$ is surjective. In this case,
it automatically descends to a 
ring isomorphism ${\mathbb R} \otimes_{H^*(BG)}H^*_G(M) \to H^*(M)$.

(d) The $T$-action on $M$ induced by restriction is equivariantly formal.

(e) $\dim H^*(M)=\dim H^*(M^T)$, where $M^T$ is the $T$-fixed point set.

\end{prop}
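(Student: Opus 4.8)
The backbone of the argument is the Serre spectral sequence of the Borel fibration $M \hookrightarrow EG\times_G M \xrightarrow{\pi} BG$; the remaining tools are the Eilenberg–Moore spectral sequence of the fibre square, the reduction to a maximal torus, and the localization theorem. Since $G$ is connected, $BG$ is simply connected, the local system is trivial, and $E_2^{p,q}=H^p(BG)\otimes H^q(M)$, converging to $H^*_G(M)$ as an algebra over $H^*(BG)$. The point to record at the outset is that $i^*$ is the edge homomorphism onto the fibre: no differential can enter the column $p=0$, so $E_\infty^{0,*}$ is a subspace of $E_2^{0,*}=H^*(M)$ and $\operatorname{im} i^* = E_\infty^{0,*}$, while dually $\pi^*$ realizes the base edge $E_2^{*,0}=H^*(BG)$.

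I would first dispose of (a)$\Leftrightarrow$(b)$\Leftrightarrow$(c). The implication (b)$\Rightarrow$(a) is immediate, as $H^*(M)\otimes H^*(BG)$ is free. For (c)$\Rightarrow$(b) I invoke the Leray–Hirsch theorem: lifting a homogeneous $\RR$-basis of $H^*(M)$ through the surjection $i^*$ yields an $H^*(BG)$-module basis of $H^*_G(M)$. It is convenient to note that (c) is equivalent to degeneration of the spectral sequence at $E_2$: surjectivity of $i^*$ means $E_\infty^{0,*}=E_2^{0,*}$, i.e.\ the fibre classes are permanent cocycles; since the base classes are permanent as well and the two edges generate $E_2$ as an algebra, the Leibniz rule forces every $d_r$ to vanish. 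The delicate implication is (a)$\Rightarrow$(c): a naive comparison of Poincaré series does \emph{not} suffice, since the mere inequality of the series of $E_\infty$ and $E_2$ fails to pin down the rank. Instead I would use the Eilenberg–Moore spectral sequence $\operatorname{Tor}^{H^*(BG)}_{*}(\RR, H^*_G(M))\Rightarrow H^*(M)$. If $H^*_G(M)$ is $H^*(BG)$-free then all higher $\operatorname{Tor}$ vanish, this sequence collapses, and one obtains a ring isomorphism $\RR\otimes_{H^*(BG)}H^*_G(M)\xrightarrow{\ \sim\ }H^*(M)$, which simultaneously yields the surjectivity of $i^*$ and the last assertion of (c).

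Next I would relate $G$ to its maximal torus $T$ in order to reach (d). Two classical inputs are needed: the Borel description $H^*(BG)=H^*(BT)^W$, under which $H^*(BT)$ is a free—hence faithfully flat—$H^*(BG)$-module, and the base-change isomorphism $H^*_T(M)\cong H^*_G(M)\otimes_{H^*(BG)}H^*(BT)$ of $H^*(BT)$-modules. Granting these, $H^*_G(M)$ is $H^*(BG)$-free if and only if its faithfully flat base change $H^*_T(M)$ is $H^*(BT)$-free, which is precisely (d).

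Finally, (d)$\Leftrightarrow$(e) is the content of the Borel localization theorem for $T$. As $M^T$ carries the trivial $T$-action, $H^*_T(M^T)=H^*(M^T)\otimes H^*(BT)$ is free of rank $\dim H^*(M^T)$, and localization identifies this rank with $\operatorname{rank}_{H^*(BT)}H^*_T(M)$. On the other hand the Serre spectral sequence for $T$ gives $\operatorname{rank}_{H^*(BT)}H^*_T(M)\le \dim H^*(M)$, with equality exactly when that spectral sequence degenerates, i.e.\ exactly when the $T$-action is equivariantly formal; hence (e), namely $\dim H^*(M)=\dim H^*(M^T)$, is equivalent to (d). I expect the main obstacle to be the torus reduction: neither the freeness of $H^*(BT)$ over $H^*(BT)^W$ nor the base-change isomorphism $H^*_T(M)\cong H^*_G(M)\otimes_{H^*(BG)}H^*(BT)$ is formal, the latter requiring a separate Leray–Hirsch argument for the bundle $EG\times_T M\to EG\times_G M$ with fibre $G/T$. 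Once these are in place, together with the Eilenberg–Moore input for (a)$\Rightarrow$(c) and the localization theorem for (d)$\Leftrightarrow$(e), all the equivalences close up.
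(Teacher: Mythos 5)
Your proposal is correct and follows essentially the same route as the paper: the Leray--Serre spectral sequence of the Borel fibration is the backbone, with (b) and (c) read off from its degeneration at $E_2$. For the equivalences with (a), (d) and (e) the paper simply cites Allday--Puppe, Guillemin--Ginzburg--Karshon and Hsiang; the Eilenberg--Moore, faithfully-flat base change and localization arguments you supply are precisely the content of those references.
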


\begin{proof} A key ingredient of the proof is the Leray-Serre spectral sequence of the
bundle $M \stackrel{i}{\to}  EG\times_G M \stackrel{\pi}{\to} BG$, whose $E_2$-term is $H^*(M)\otimes H^*(BG)$. Both (b) and (c) are clearly equivalent to the fact that this spectral
sequence collapses at  $E_2$. The equivalence to (a) is more involved, see for instance
\cite[Corollary (4.2.3)]{AlldayPuppe}.
The equivalence to (d) is the content of \cite[Proposition C.26]{GGK}.  For the equivalence to (e), see \cite[p.~46]{Hs}.
\end{proof} 

We also mention the following useful criterion for equivariant formality:

\begin{prop}\label{wealso}
An action of a compact connected Lie group $G$ on a compact manifold $M$ with $H^{\rm odd}(M)=\{0\}$ is automatically equivariantly formal.
The converse implication is true provided that the $T$-fixed point set $M^T$ is finite.
\end{prop}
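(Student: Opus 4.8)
The plan is to treat the two implications separately, in both cases exploiting the Leray--Serre spectral sequence of the Borel fibration $M \stackrel{i}{\to} EG\times_G M \stackrel{\pi}{\to} BG$, whose $E_2$-term is $H^*(M)\otimes H^*(BG)$ (the coefficient system is untwisted since $G$ is connected), exactly as in the proof of Proposition \ref{abo}. For the forward direction I would argue by a parity count. Assuming $H^{\rm odd}(M)=\{0\}$, and recalling that for a compact connected group $G$ the algebra $H^*(BG)$ is polynomial on generators of even degree, so that $H^{\rm odd}(BG)=\{0\}$ as well, every nonzero entry $E_2^{p,q}=H^p(BG)\otimes H^q(M)$ lies in even total degree $p+q$. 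The differential $d_r\colon E_r^{p,q}\to E_r^{p+r,\,q-r+1}$ raises total degree by one and therefore lands in groups of odd total degree, all of which vanish; hence every $d_r$ is zero and the spectral sequence collapses at $E_2$. Since collapse is equivalent to condition (b) of Proposition \ref{abo}, and hence to equivariant formality (a), the action is equivariantly formal.

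For the converse I would reduce to the maximal torus and compare two integers attached to $M$. If the action is equivariantly formal then so is the restricted $T$-action, by Proposition \ref{abo}(d), and Proposition \ref{abo}(e) gives $\dim H^*(M)=\dim H^*(M^T)$. As $M^T$ is finite, $H^*(M^T)$ is concentrated in degree zero, so $\dim H^*(M)=|M^T|$. On the other hand, for any torus action on a compact manifold one has $\chi(M)=\chi(M^T)$ (for instance via the vector field generated by a generic element of the Lie algebra of $T$, whose zero set is exactly $M^T$), and finiteness of $M^T$ gives $\chi(M^T)=|M^T|$. Combining $\dim H^*(M)=\dim H^{\rm even}(M)+\dim H^{\rm odd}(M)$ with $\chi(M)=\dim H^{\rm even}(M)-\dim H^{\rm odd}(M)$ yields $2\dim H^{\rm odd}(M)=\dim H^*(M)-\chi(M)=|M^T|-|M^T|=0$, so $H^{\rm odd}(M)=\{0\}$.

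The main obstacle is the converse. The forward implication is a pure parity observation on the $E_2$-page, but the converse rests on two separate inputs, each available only after passing to $T$: the count $\dim H^*(M)=|M^T|$, which is the localization content packaged into Proposition \ref{abo}(e), and the Euler-characteristic identity $\chi(M)=\chi(M^T)$. What makes the statement nontrivial is that equivariant formality by itself only controls the \emph{total} dimension of $H^*(M)$; it is the coincidence of this total count with the \emph{signed} Euler count that forces the odd Betti numbers to vanish, and this is precisely where the hypothesis that $M^T$ is finite is used.
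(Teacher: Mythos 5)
Your proof is correct, but the converse is handled by a genuinely different mechanism than the paper's. For the forward implication you run the same parity/collapse argument as the paper (the paper phrases it for the bundle $ET\times_T M\to BT$, you for $EG\times_G M\to BG$; by Proposition \ref{abo}(d) these are interchangeable, and $H^{\rm odd}(BG;\RR)=\{0\}$ for connected $G$, so your version is fine). For the converse, the paper argues structurally: by the Borel localization theorem the kernel of $H^*_T(M)\to H^*_T(M^T)$ is the torsion submodule, which vanishes under equivariant formality, so $H^*_T(M)$ embeds into $H^*_T(M^T)=H^*(M^T)\otimes H^*(BT)$; finiteness of $M^T$ makes the latter purely even, hence so are $H^*_T(M)$ and then $H^*(M)$. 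You instead make a numerical count: Proposition \ref{abo}(e) gives $\dim H^*(M)=\dim H^*(M^T)=|M^T|$, the classical identity $\chi(M)=\chi(M^T)=|M^T|$ gives the signed count, and subtracting yields $2\dim H^{\rm odd}(M)=0$. Both are valid; the paper's route gives the slightly stronger structural conclusion that $H^*_T(M)$ itself is concentrated in even degrees, while yours is more elementary in that it avoids invoking localization explicitly, at the cost of importing the fact $\chi(M)=\chi(M^T)$. On that last point, your justification via the Killing field of a generic element of $\mft$ is fine here because $M^T$ is finite, so the zeros are isolated and each has index $+1$ (the linearization is a sum of nontrivial rotations); for the general statement one would normally cite the standard fixed-point result for torus actions rather than Poincar\'e--Hopf with non-isolated zeros.
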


\begin{proof} If $H^{\rm odd}(M)=\{0\}$ then the Leray-Serre spectral sequence of the bundle 
$ET \times_T M \to BT$ collapses at $E_2$, which is equal to $H^*(M)\otimes H^*(BT)$.
To prove the second assertion we note that the Borel localization theorem implies that the kernel of the natural map $H^*_T(M)\to H^*_T(M^T)$ equals the torsion submodule of $H^*_T(M)$, which vanishes in the case of an equivariantly formal action. Because $M^T$ is assumed to be finite, $H^*_T(M^T)$ is concentrated only in even degrees. Consequently, the same holds true for $H^*_T(M)$,
as well as for $H^*(M)$, due to the isomorphism $H^*_T(M)\simeq H^*(M)\otimes H^*(BT)$.
\end{proof}

\subsection{Cohen-Macaulay modules and group actions}\label{subsec:cm}
If $R$ is a graded *local Noetherian graded ring, one can associate to any finitely generated graded $R$-module $A$ its Krull dimension,
respectively depth (see e.g.~\cite[Section 1.5]{BrunsHerzog}). We always have that $\depth A\leq \dim A$, and if dimension and depth coincide, then we say that the finitely-generated graded $R$-module $A$ is a \emph{Cohen-Macaulay module} over $R$.
The following result is an effective tool frequently used in this paper:

\begin{lem}\label{ineq} If $0 \to A \to B \to C \to 0$ is a short exact sequence of finitely generated $R$-modules, then we have:
\begin{itemize}
\item[(i)]  $\dim B = {\rm max} \{\dim A, \dim C\}$;
\item[(ii)] ${\rm depth} \,  A \ge {\rm min} \{ {\rm depth}  \, B, {\rm depth } \,  C +1\}$;
\item[(iii)] ${\rm depth} \,  C \ge {\rm min} \{ {\rm depth} \, A-1, {\rm depth } \, B\}$.
\end{itemize}
\end{lem}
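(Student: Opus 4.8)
The plan is to handle the dimension statement (i) separately from the two depth statements, which I would treat together, since each rests on a different standard device: (i) on the description of Krull dimension through the support, and (ii), (iii) on the cohomological characterization of depth. For (i) I would use that for a finitely generated graded $R$-module $M$ one has $\dim M = \dim R/\Ann M = \max\{\dim R/\mfp : \mfp \in \supp M\}$. Because localization is exact, the short exact sequence $0\to A \to B \to C \to 0$ gives $B_\mfp = 0$ precisely when both $A_\mfp = 0$ and $C_\mfp = 0$, so $\supp B = \supp A \cup \supp C$. Taking the maximum of $\dim R/\mfp$ over this union immediately yields $\dim B = \max\{\dim A, \dim C\}$.

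For (ii) and (iii) the key tool is the Ext-description of depth. Writing $k = R/\mfm$ for the residue field at the graded maximal ideal $\mfm$, the graded version of Rees' theorem gives
\[
\depth M = \min\{ i : \Ext^i_R(k, M) \neq 0\}
\]
for every nonzero finitely generated graded $R$-module $M$, with the convention $\depth 0 = +\infty$. Applying the functors $\Ext^\bullet_R(k, -)$ to the short exact sequence produces the long exact sequence
\[
\cdots \to \Ext^{i-1}_R(k,C)\to \Ext^i_R(k,A)\to \Ext^i_R(k,B)\to \Ext^i_R(k,C)\to \Ext^{i+1}_R(k,A)\to \cdots ,
\]
and both inequalities then fall out of a short diagram chase on this sequence.

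Concretely, for (ii) I would set $a = \depth A$, $b = \depth B$, $c = \depth C$ and argue by contradiction, assuming $a < \min\{b, c+1\}$. Then $a < b$ forces $\Ext^a_R(k,B) = 0$, while $a \leq c$ forces $\Ext^{a-1}_R(k,C) = 0$; exactness of the segment $\Ext^{a-1}_R(k,C)\to \Ext^a_R(k,A)\to \Ext^a_R(k,B)$ then squeezes $\Ext^a_R(k,A)$ to zero, contradicting $a = \depth A$. For (iii) I would assume $c < \min\{a-1, b\}$, so that $\Ext^c_R(k,B) = 0$ (from $c<b$) and $\Ext^{c+1}_R(k,A) = 0$ (from $c \leq a-2$); exactness of $\Ext^c_R(k,B)\to \Ext^c_R(k,C)\to \Ext^{c+1}_R(k,A)$ then forces $\Ext^c_R(k,C)=0$, contradicting $c = \depth C$.

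I do not expect a genuine obstacle here: once exactness of localization is used for (i) and the Ext-description of depth for (ii), (iii), the remaining verifications are entirely formal. The only points needing care are bookkeeping ones, namely making sure the characterizations of dimension and depth are invoked in their graded *local form rather than the ordinary local one, and treating the degenerate cases in which one of $A$, $B$, $C$ vanishes, where the convention $\depth 0 = +\infty$ is exactly what keeps all three inequalities valid.
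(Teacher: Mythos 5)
Your proof is correct. The paper itself gives no argument for this lemma---it simply cites Matsumura for (i) and \cite[Proposition 1.2.9]{BrunsHerzog} for (ii) and (iii)---and what you have written out (the support/localization argument for dimension, and the $\Ext^\bullet_R(k,-)$ long exact sequence with the Rees characterization of depth for the two inequalities) is precisely the standard proof contained in those references, including the correct attention to the graded *local setting and the degenerate cases.
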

\begin{proof}
For (i) we refer to \cite[Section 12]{Matsumura} and for (ii) and (iii) to  \cite[Proposition 1.2.9]{BrunsHerzog}. 
\end{proof}
\begin{cor}\label{sumofCM}
If $A$ and $B$ are Cohen-Macaulay modules over $R$ of the same dimension $d$, then $A\oplus B$ is again Cohen-Macaulay of dimension $d$.
\end{cor}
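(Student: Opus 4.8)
The plan is to apply Lemma \ref{ineq} to the canonical split short exact sequence
\[
0 \to A \to A\oplus B \to B \to 0,
\]
whose first map is $a\mapsto (a,0)$ and whose second map is $(a,b)\mapsto b$. The dimension statement is then immediate from part (i) of the lemma: $\dim(A\oplus B)=\max\{\dim A,\dim B\}=d$. Since one always has $\depth\le\dim$, this already gives $\depth(A\oplus B)\le d$, so the only thing left to establish is the reverse inequality $\depth(A\oplus B)\ge d$; together with the bound just obtained it will force $\depth(A\oplus B)=\dim(A\oplus B)=d$ and hence the Cohen-Macaulay property.

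The hard part will be exactly this lower bound on the depth, and I expect it to be the main obstacle. The natural first attempt is to feed the same sequence (and its companion $0\to B\to A\oplus B\to A\to 0$) into parts (ii) and (iii) of Lemma \ref{ineq}. However, substituting $\depth A=\depth B=d$ into those inequalities only reproduces the already-known bound $\depth(A\oplus B)\le d$: both (ii) and (iii) control the depth of the \emph{outer} terms in terms of the middle term, whereas the depth of the middle term never appears on a left-hand side. Thus the stated inequalities cannot, on their own, supply the required lower bound on $\depth(A\oplus B)$, and some additional input is needed.

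To get around this I would invoke the additivity of depth over direct sums. Writing $\mathfrak{m}$ for the graded maximal ideal of $R$ and using the characterization $\depth M=\min\{i:\Ext^i_R(R/\mathfrak{m},M)\ne 0\}$ together with the isomorphism $\Ext^i_R(R/\mathfrak{m},A\oplus B)\cong\Ext^i_R(R/\mathfrak{m},A)\oplus\Ext^i_R(R/\mathfrak{m},B)$, one obtains
\[
\depth(A\oplus B)=\min\{\depth A,\depth B\}=d.
\]
(Equivalently, one could build a common regular sequence of length $d$ on $A$ and $B$ by a prime-avoidance argument at each step, or use that local cohomology commutes with finite direct sums.) Combining this with $\dim(A\oplus B)=d$ yields $\depth(A\oplus B)=\dim(A\oplus B)=d$, so $A\oplus B$ is Cohen-Macaulay of dimension $d$, as claimed.
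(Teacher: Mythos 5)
Your proof is correct. For the dimension statement it coincides with the paper's argument, whose entire proof is the single sentence ``This follows from Lemma \ref{ineq}, applied to the exact sequence $0\to A\to A\oplus B\to B\to 0$.'' Where you genuinely diverge is in your (accurate) observation that parts (ii) and (iii) of Lemma \ref{ineq} as stated only bound the depths of the \emph{outer} terms from below, and hence cannot by themselves deliver $\depth(A\oplus B)\ge d$. The paper's one-line proof is implicitly invoking the third inequality of the standard Depth Lemma, namely $\depth B\ge\min\{\depth A,\depth C\}$ for $0\to A\to B\to C\to 0$; this is part of the very result cited for (ii) and (iii), \cite[Proposition 1.2.9]{BrunsHerzog}, but it was not reproduced in the statement of Lemma \ref{ineq}. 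Your substitute --- the equality $\depth(A\oplus B)=\min\{\depth A,\depth B\}$ obtained from $\Ext^i_R(R/\mathfrak{m},A\oplus B)\cong\Ext^i_R(R/\mathfrak{m},A)\oplus\Ext^i_R(R/\mathfrak{m},B)$ together with the Ext characterization of depth --- is valid in the graded *local setting and in fact gives slightly more, an equality rather than a lower bound. So your route buys a self-contained and more careful argument at the cost of appealing to the Ext characterization; the paper's route is shorter but, read literally against its own Lemma \ref{ineq}, rests on an inequality it never stated. Either quoting that missing inequality from the same reference or running your Ext argument closes the point; there is no gap in what you wrote.
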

\begin{proof} This follows from Lemma \ref{ineq}, applied to the exact sequence $0 \to A \to A \oplus B \to B \to 0$.
\end{proof}

Yet another useful tool will be for us the following lemma, which is the graded version of  \cite[Proposition 12, Section  IV.B]{Se}.

\begin{lem}\label{serre} Let $R$ and $S$ be two Noetherian graded *local rings and let $\varphi : R\to S$ be a homomorphism
that makes $S$ into an $R$-module which is finitely generated. If $A$ is a finitely generated $S$-module, then we have:
$$ {\rm depth}_R \, A ={\rm depth}_S \, A  \quad {\it and} \quad \dim_R A  = \dim_S A.$$
In particular, $A$ is Cohen-Macaulay as $R$-module if and only if it is Cohen-Macaulay as $S$-module.
\end{lem}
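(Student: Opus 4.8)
The plan is to prove Lemma~\ref{serre}, the graded analogue of Serre's result relating depth and dimension under a finite ring extension. The essential point is that $\varphi:R\to S$ makes $S$ a finitely generated $R$-module, so that any finitely generated $S$-module $A$ is automatically finitely generated as an $R$-module via $\varphi$; both invariants $\depth$ and $\dim$ are therefore defined on both sides, and the task is to show they agree.

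First I would treat the Krull dimension. Since $A$ is finitely generated over $S$ and $S$ is finitely generated over $R$, the $R$-module structure on $A$ is obtained by restriction of scalars along a finite homomorphism. The dimension of a finitely generated module equals the dimension of the quotient ring $R/\Ann_R A$ (respectively $S/\Ann_S A$). I would argue that $R/\Ann_R A \to S/\Ann_S A$ is again a finite (integral) extension of rings, so by the standard fact that integral extensions preserve Krull dimension, one gets $\dim_R A = \dim_S A$. Concretely, $\Ann_R A = \varphi^{-1}(\Ann_S A)$, and $S/\Ann_S A$ is finite over the image of $R/\Ann_R A$; the going-up theorem then gives equality of dimensions. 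This part is formal and essentially identical to the ungraded case; the grading causes no difficulty for dimension.

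Next I would handle the depth, which is where the grading matters and which I expect to be the main obstacle. In the graded $*$local setting one works with the graded maximal ideals $\mathfrak{m}_R\subset R$ and $\mathfrak{m}_S\subset S$, and $\depth$ is computed either via (graded) regular sequences in the maximal ideal or, equivalently, via the vanishing of local cohomology $H^i_{\mathfrak{m}}(A)$. The key observation is that because $S$ is finite over $R$, the graded maximal ideal $\mathfrak{m}_S$ lies over $\mathfrak{m}_R$, so that $\mathfrak{m}_R S$ is $\mathfrak{m}_S$-primary and the $\mathfrak{m}_R$-adic and $\mathfrak{m}_S$-adic topologies on $A$ coincide. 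Consequently the local cohomology modules satisfy $H^i_{\mathfrak{m}_R}(A)\cong H^i_{\mathfrak{m}_S}(A)$ for all $i$, and since $\depth_R A = \min\{i : H^i_{\mathfrak{m}_R}(A)\neq 0\}$ and similarly over $S$, the two depths coincide. The careful point is to confirm that the comparison of local cohomology (or, in the regular-sequence formulation, the fact that a sequence in $\mathfrak{m}_R$ is $A$-regular if and only if its image in $\mathfrak{m}_S$ is) goes through verbatim in the graded $*$local category; this is exactly the content of the graded version of \cite[Proposition 12, Section IV.B]{Se}, and the standard reference \cite{BrunsHerzog} develops graded local cohomology and graded depth precisely so that these arguments transfer.

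Finally, the "in particular" clause is immediate: combining $\depth_R A = \depth_S A$ and $\dim_R A = \dim_S A$, the equality $\depth = \dim$ holds over $R$ if and only if it holds over $S$, so $A$ is Cohen-Macaulay as an $R$-module precisely when it is Cohen-Macaulay as an $S$-module. The overall strategy is thus to reduce both statements to well-known facts about finite extensions---going-up for dimension and invariance of local cohomology for depth---with the only real work being the verification that the grading and $*$locality do not obstruct these transfers.
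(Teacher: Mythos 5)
The paper gives no proof of this lemma at all: it simply declares it to be the graded version of \cite[Proposition 12, Section IV.B]{Se}, so there is nothing to compare against except that citation. Your argument is correct and is essentially the standard proof of the cited result --- going-up applied to the integral extension $R/\operatorname{Ann}_R A\hookrightarrow S/\operatorname{Ann}_S A$ for the dimension, and the invariance of (graded) local cohomology under the finite base change, via $\sqrt{\mathfrak{m}_R S}=\mathfrak{m}_S$, for the depth --- with the graded $*$local formalism of \cite{BrunsHerzog} guaranteeing that both characterizations transfer.
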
 

We say that the group action $G\times M \to M$ is {\it Cohen-Macaulay} if
$H^*_G(M)$ regarded as  $H^*(BG)$-module is Cohen-Macaulay. 
The relevance of this notion for the theory of equivariant cohomology was for the first time noticed  by Bredon in \cite{Bredon}.
Other references are \cite{FranzPuppe2003}, \cite{GR}, and 
  \cite{GT'}. %We will freely use notions and results which can be found in these two references.

The following result gives an example of a Cohen-Macaulay action, which is important for this paper.
We first note that if $G$ is a compact Lie group and $K\subset G$ a subgroup, then there is a canonical map
$BK\to BG$ induced  by the presentations $BG = EG/G$ and $BK= EG/K$. The ring homomorphism $H^*(BG)\to H^*(BK)$
makes $H^*(BK)$ into an $H^*(BG)$-module.   
\begin{prop}\label{lem:cohomrank} Let $G$ be a compact connected Lie group
and $K\subset G$  a Lie subgroup, possibly non-connected. Then $H^*_G(G/K)=H^*(BK)$ is a Cohen-Macaulay module over $H^*(BG)$ of dimension equal to $\rk K$. 
\end{prop}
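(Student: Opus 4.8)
The plan is to first make the identification $H^*_G(G/K)=H^*(BK)$ completely explicit, and then to transfer both the Cohen--Macaulay property and the dimension count from $H^*(BK)$ regarded as a module over itself to $H^*(BK)$ regarded as a module over $H^*(BG)$, by means of Lemma \ref{serre}. For the identification I would observe that the Borel construction $EG\times_G(G/K)$ is homeomorphic to $EG/K=BK$: every class has a representative whose $G/K$-coordinate is the identity coset $eK$, and two such representatives are identified exactly when they differ by right translation by an element of $K$, so that $EG\times_G(G/K)\cong EG/K=BK$. Under this homeomorphism the projection $\pi\colon EG\times_G(G/K)\to BG$ becomes the canonical map $BK\to BG$, and hence the $H^*(BG)$-module structure on $H^*_G(G/K)$ is precisely the one induced by the ring homomorphism $H^*(BG)\to H^*(BK)$. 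This turns the statement into a purely algebraic assertion about that homomorphism.

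Next I would record the structure of $H^*(BK)$ with real coefficients. Fixing a maximal torus $S\subseteq K$, so that $\dim S=\rk K=:r$, one has $H^*(BS;\RR)=\RR[v_1,\dots,v_r]$ together with the standard computation $H^*(BK;\RR)\cong H^*(BS;\RR)^{W'}$, where $W'=N_K(S)/S$ is a finite group; in the possibly non-connected case this uses in addition the identification $H^*(BK;\RR)=H^*(BK^0;\RR)^{\pi_0(K)}$, so that the $\pi_0(K)$-action is absorbed into $W'$. Thus $H^*(BK)$ is the ring of invariants of a finite group acting on a polynomial ring over a field of characteristic zero. By the Hochster--Eagon theorem such an invariant ring is Cohen--Macaulay, and its Krull dimension equals that of the ambient polynomial ring, namely $r=\rk K$. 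In other words $H^*(BK)$ is Cohen--Macaulay of dimension $\rk K$ as a module over itself.

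Finally I would verify that the homomorphism $H^*(BG)\to H^*(BK)$ makes $H^*(BK)$ into a finitely generated $H^*(BG)$-module, so that Lemma \ref{serre} is applicable. After conjugating so that $S\subseteq T$ for a maximal torus $T$ of $G$, the restriction $H^*(BT)\to H^*(BS)$ is surjective, while $H^*(BT)$ is finitely generated as a module over $H^*(BG)=H^*(BT)^{W_G}$ because the Weyl group $W_G$ is finite; hence $H^*(BS)$ is finitely generated over $H^*(BG)$. Since $H^*(BK)=H^*(BS)^{W'}$ is an $H^*(BG)$-submodule of $H^*(BS)$ and $H^*(BG)$ is Noetherian, $H^*(BK)$ is itself a finitely generated $H^*(BG)$-module. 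Applying Lemma \ref{serre} to the homomorphism $H^*(BG)\to H^*(BK)$ and the module $A=H^*(BK)$ then yields $\dim_{H^*(BG)}H^*(BK)=\dim_{H^*(BK)}H^*(BK)=\rk K$ and shows that $H^*(BK)$ is Cohen--Macaulay over $H^*(BG)$ exactly because it is Cohen--Macaulay over itself. I expect the main obstacle to be the second step: one must correctly recognize $H^*(BK;\RR)$ as a finite-group invariant ring, handling the non-connected case through the $\pi_0(K)$-action, and then invoke Hochster--Eagon for Cohen--Macaulayness. Once this structural input and the module-finiteness are in place, Lemma \ref{serre} makes the transfer to the $H^*(BG)$-module structure automatic.
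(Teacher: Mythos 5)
Your argument is correct, but it follows a different route from the one the paper uses in the proof of Proposition \ref{lem:cohomrank}. The paper gets the Cohen--Macaulay property by quoting \cite[Corollary 4.3]{GR}, applied to the $G$-action on $G/K$ (all of whose isotropy groups have the same rank), and then computes the dimension in two steps: first for the identity component $K_0$, where $H^*(BK_0)$ is a polynomial ring of Krull dimension $\rk K_0$ and Lemma \ref{serre} applies directly, and then for $K$ itself by viewing $H^*(BK)=H^*(BK_0)^{K/K_0}$ as a nonzero $H^*(BG)$-submodule of the Cohen--Macaulay module $H^*(BK_0)$ and invoking \cite[Lemma 5.4]{GT'} to conclude the dimensions agree. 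You instead recognize $H^*(BK;\RR)$ as the invariant ring of a finite group acting on a polynomial ring, apply Hochster--Eagon to see it is a Cohen--Macaulay ring of Krull dimension $\rk K$, check module-finiteness over $H^*(BG)$, and transfer everything by Lemma \ref{serre}; this is essentially the alternative justification the authors themselves sketch in Remark \ref{assertion}. Your version is more classical and self-contained (it needs Hochster--Eagon and Chevalley restriction rather than the equivariant-cohomology result of \cite{GR}), and it yields Corollary \ref{nonc} (that $H^*(BK)$ is a Cohen--Macaulay ring) along the way rather than as a separate deduction; the paper's version has the advantage of reusing machinery that is central to the rest of the argument and of handling the non-connected case by a purely module-theoretic submodule lemma instead of the identification $H^*(BK)=H^*(BS)^{N_K(S)/S}$, whose non-connected form requires the small extra check that every component of $K$ meets $N_K(S)$. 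All the steps you give do go through, so there is no gap.
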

\begin{proof}
The fact that $H^*_G(G/K)= H^*(BK)$ is Cohen-Macaulay is a very special case of \cite[Corollary 4.3]{GR} because all isotropy groups of the natural $G$-action on $G/K$ have the same rank. 
To find its dimension, we consider the identity component $K_0$ of $K$ and note that also 
$H^*(BK_0)$ is a Cohen-Macaulay module over $H^*(BG)$;  by Lemma \ref{serre}, 
the dimension of $H^*(BK_0)$ over $H^*(BG)$ is equal to the (Krull) dimension  
of the (polynomial) ring $H^*(BK_0)$, which is $\rk K_0$. Let us now observe that
  $H^*(BK)=H^*(BK_0)^{K/K_0}$ is  a 
nonzero 
$H^*(BG)$-submodule of the Cohen-Macaulay module $H^*(BK_0)$, hence by \cite[Lemma 5.4]{GT'} (the graded version of \cite[Lemma 4.3]{FranzPuppe2003}),
$\dim_{H^*(BG)} H^*(BK)= \dim_{H^*(BG)}H^*(BK_0) =\rk K_0.$
\end{proof}

As a byproduct, we can now easily deduce the following result.

\begin{cor}\label{nonc} If $G$ is a (possibly non-connected) compact Lie group, then $H^*(BG)$ is a Cohen-Macaulay ring.
\end{cor}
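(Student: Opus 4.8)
The plan is to realize $H^*(BG)$ as the equivariant cohomology of a homogeneous space for a \emph{connected} group, so that Proposition \ref{lem:cohomrank} applies directly, and then to transfer the Cohen-Macaulay property from the module setting to the ring setting by means of Lemma \ref{serre}. The point is that the Proposition already handles possibly non-connected subgroups $K$, so we only need to exhibit $G$ itself as such a subgroup.

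First I would embed $G$ as a closed subgroup of a compact connected Lie group. By the Peter-Weyl theorem the compact Lie group $G$ carries a faithful finite-dimensional unitary representation, which yields a closed embedding $G\hookrightarrow U(n)$ for some $n$; set $\tilde G=U(n)$, a compact connected Lie group. Viewing $G$ as the (possibly non-connected) subgroup $K=G$ of $\tilde G$, Proposition \ref{lem:cohomrank} gives that $H^*_{\tilde G}(\tilde G/G)=H^*(BG)$ is a Cohen-Macaulay module over $H^*(B\tilde G)$.

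Next I would invoke Lemma \ref{serre} with $R=H^*(B\tilde G)$, $S=H^*(BG)$, $\varphi\colon R\to S$ the canonical map induced by $G\hookrightarrow\tilde G$, and $A=S$. Here $H^*(B\tilde G)$ is a polynomial ring, hence Noetherian and graded *local, while $H^*(BG)=H^*(BG_0)^{G/G_0}$ is the ring of invariants of a polynomial ring under the finite group $G/G_0$, hence a finitely generated $\RR$-algebra which is again Noetherian and graded *local, its unique graded maximal ideal being the part of positive degree. That $S=H^*_{\tilde G}(\tilde G/G)$ is finitely generated as an $R$-module follows from the finite generation of the equivariant cohomology of a compact manifold under a compact group action, read off from the Leray-Serre spectral sequence of the fibration $\tilde G/G\to BG\to B\tilde G$ (the fiber has finite-dimensional cohomology and $H^*(B\tilde G)$ is Noetherian). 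Taking $A=S$, which is trivially a finitely generated $S$-module, Lemma \ref{serre} then shows that $A=H^*(BG)$ is Cohen-Macaulay as an $R$-module if and only if it is Cohen-Macaulay as an $S$-module; the former holds by the previous paragraph, so $H^*(BG)$ is Cohen-Macaulay as a module over itself, i.e.~$H^*(BG)$ is a Cohen-Macaulay ring.

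The argument is essentially a bookkeeping of hypotheses rather than a substantial new computation, so the main obstacle is simply to verify carefully that the rings $H^*(BG)$ and $H^*(B\tilde G)$ satisfy the Noetherian, graded *local, and finite-generation conditions required by Lemma \ref{serre}. Of these, the finite generation of $H^*(BG)$ over $H^*(B\tilde G)$ is the only point needing justification, and it is guaranteed precisely because both $\tilde G$ and the homogeneous space $\tilde G/G$ are compact.
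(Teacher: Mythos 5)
Your proof is correct and follows essentially the same route as the paper: embed $G$ in $U(n)$, apply Proposition \ref{lem:cohomrank} to see $H^*(BG)$ is Cohen-Macaulay over $H^*(BU(n))$, and then use Lemma \ref{serre} to transfer the property to $H^*(BG)$ as a ring. The only cosmetic difference is that the paper cites Venkov for the Noetherianity of $H^*(BG)$, whereas you deduce it from the identification $H^*(BG)=H^*(BG_0)^{G/G_0}$ as invariants of a polynomial ring under a finite group; both justifications are valid.
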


\begin{proof} 
There exists a unitary group $U(n)$ which contains $G$ as a subgroup. By Proposition 
\ref{lem:cohomrank}, $H^*(BG)$ is a Cohen-Macaulay module over $H^*(BU(n))$. Because $H^*(BG)$ is Noetherian by a result of Venkov \cite{Ve}, we can apply Lemma \ref{serre} and conclude that $H^*(BG)$ is a Cohen-Macaulay ring.
\end{proof}

\begin{rem}\label{assertion} The assertions  in Proposition \ref{lem:cohomrank} and Corollary \ref{nonc} are not new and can also be justified
as follows. Denote by
${\mathfrak g}$  the Lie algebra of a compact Lie group $G$ and by ${\mathfrak t}$ the Lie algebra of a maximal torus $T$ in $G$,
then 
$H^*(BG)=S({\mathfrak g}^*)^G=S({\mathfrak t}^*)^{W(G)}$,
where $W(G)=N_G(T)/Z_G(T)$ is the Weyl group of $G$. (Here we have used the Chern-Weil isomorphism 
and the Chevalley restriction theorem, see e.g.~\cite[p.~311]{Milnor}, respectively \cite[Theorem 4.12]{PaTe}).
The fact that the ring $S({\mathfrak t}^*)^{W(G)}$ is Cohen-Macaulay follows from \cite[Proposition 13]{Ho-Ea} 
(see also
\cite[Corollary 6.4.6]{BrunsHerzog} or \cite[Theorem B, p.~176]{Kane}).  Proposition \ref{lem:cohomrank} 
is now a direct consequence of Lemma \ref{serre} and the well-known fact that the $G$-equivariant cohomology of a compact manifold is a finitely generated
$H^*(BG)$-module, see e.g.~\cite{Quillen}.
 \end{rem}

In general, if a group action $G\times M \to M$ is equivariantly formal, then it is Cohen-Macaulay.
The next result, which is actually Proposition 2.5  in \cite{GR}, establishes a more precise relationship between these two notions.
It also involves   
$$\Mmax:=\{p \in M \, :  \, \rk G_p = \rk G\}.$$

\begin{prop}\label{gr} {\rm (\cite{GR})} A $G$-action on $M$ is equivariantly formal 
if and only if it is Cohen-Macaulay and $\Mmax\neq \emptyset$.
\end{prop}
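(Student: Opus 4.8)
The plan is to reduce both implications to a single dimension formula, namely
$$\dim_{H^*(BG)} H^*_G(M) = \max_{p\in M}\rk G_p,$$
and then to exploit the fact that $H^*(BG)=S(\mathfrak{t}^*)^{W(G)}$ is, by the Chevalley restriction theorem (see Remark \ref{assertion}), a graded polynomial ring on $\rk G$ generators, hence a regular ring of Krull dimension $\rk G$.

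First I would establish the dimension formula by transferring the problem to the maximal torus $T$. The map $EG\times_T M \to EG\times_G M$ induced by $T\subset G$ is a fibre bundle with fibre $G/T$, and since $H^*(G/T)$ is concentrated in even degrees while $H^*(BT)$ is free of rank $|W(G)|$ over $H^*(BG)$, one obtains the standard isomorphism $H^*_T(M)\cong H^*_G(M)\otimes_{H^*(BG)}H^*(BT)$. In particular $H^*_T(M)$ is isomorphic, as an $H^*(BG)$-module, to a direct sum of $|W(G)|$ copies of $H^*_G(M)$, so the two have the same dimension over $H^*(BG)$. On the other hand, Lemma \ref{serre} applied to $H^*(BG)\to H^*(BT)$ gives $\dim_{H^*(BG)}H^*_T(M)=\dim_{H^*(BT)}H^*_T(M)$, and this last quantity is computed by the Borel--Quillen localization theorem for the torus: the support of $H^*_T(M)$ inside $\operatorname{Spec}H^*(BT)$ is the union of the Lie algebras of the isotropy groups $T_p=T\cap G_p$, so $\dim_{H^*(BT)}H^*_T(M)=\max_p\dim T_p$. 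Conjugating a maximal torus of each $G_p$ into $T$ shows $\max_p\dim T_p=\max_p\rk G_p$, and combining the displayed identities yields the formula. I expect this localization and support computation to be the main obstacle, since it is the only genuinely geometric input; everything else is commutative algebra.

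Granting the formula, the equivalence follows cleanly. For the forward implication, if the action is equivariantly formal then $H^*_G(M)$ is a nonzero free $H^*(BG)$-module, hence Cohen--Macaulay (a free module over the Cohen--Macaulay ring $H^*(BG)$ of Corollary \ref{nonc} is Cohen--Macaulay) and of dimension $\dim H^*(BG)=\rk G$; the formula then forces $\max_p\rk G_p=\rk G$, i.e.\ $\Mmax\neq\emptyset$. Conversely, suppose the action is Cohen--Macaulay and $\Mmax\neq\emptyset$. Since $\rk G_p\le\rk G$ for all $p$, the hypothesis $\Mmax\neq\emptyset$ gives $\max_p\rk G_p=\rk G$, so the formula yields $\dim_{H^*(BG)}H^*_G(M)=\rk G=\dim H^*(BG)$. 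The Cohen--Macaulay property then says $\depth_{H^*(BG)}H^*_G(M)=\rk G$, so $H^*_G(M)$ is a maximal Cohen--Macaulay module. Because $H^*(BG)$ is a graded polynomial ring, and therefore regular with $\depth H^*(BG)=\rk G$, the Auslander--Buchsbaum formula gives $\operatorname{pd}_{H^*(BG)}H^*_G(M)=\depth H^*(BG)-\depth_{H^*(BG)}H^*_G(M)=\rk G-\rk G=0$. Hence $H^*_G(M)$ is projective, and a finitely generated graded projective module over a graded polynomial ring is free; thus the action is equivariantly formal. This final step is routine once the dimension formula is available.
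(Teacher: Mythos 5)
Your argument is correct, and no essential step is missing: the dimension formula $\dim_{H^*(BG)}H^*_G(M)=\max_p\rk G_p$ (via the free base change to $H^*(BT)$, Lemma \ref{serre}, and the localization-theoretic computation of the support of $H^*_T(M)$) together with the Auslander--Buchsbaum formula over the polynomial ring $H^*(BG)$ is exactly the standard proof. Note that the paper does not prove this proposition itself but quotes it as Proposition 2.5 of \cite{GR}, and your route coincides with the one taken there, so there is nothing to compare beyond observing that you have correctly reconstructed the cited argument.
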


Note, in particular, that the $G$-action on $G/K$ mentioned in Proposition \ref{lem:cohomrank} is equivariantly
formal if and only if $\rk K = \rk G$. 

\section{Topology of transitive group actions on spheres}
The following proposition will be needed in the proof of the main result. It collects results that, in a slightly more particular situation,  were obtained by Samelson in \cite{Sa}.

\begin{prop}\label{samelson} Let $K$ be a compact Lie group, possibly non-connected, which acts transitively on
the sphere $S^{m}$, $m\ge 0$, and let $H\subset K$ be an isotropy subgroup.

(a) If $m$ is even, then $\rk H = \rk K$ and the canonical homomorphism $H^*(BK) \to H^*(BH)$ is injective.

(b) If $m$ is odd, then $\rk H = \rk K -1$ and the  canonical homomorphism $H^*(BK) \to H^*(BH)$ is surjective.
\end{prop}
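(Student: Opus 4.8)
The proof I have in mind runs the Leray--Serre spectral sequence of the Borel fibration $S^m=K/H\to BH\xrightarrow{p} BK$, after first reducing to the case that $K$ is connected. For the reduction, assume $m\ge 1$ (the case $m=0$, where $K/H$ is two points, is elementary: there $K_0\subseteq H$, whence $\rk H=\rk K$ and $H^*(BK)=H^*(BK_0)^{K/K_0}\hookrightarrow H^*(BK_0)^{H/K_0}=H^*(BH)$). Since $S^m$ is then connected and is covered by the finitely many $K_0$-orbits, which are permuted by $K$ and hence equidimensional, one of them must be of full dimension, thus open and closed, hence all of $S^m$; so $K_0$ already acts transitively, with isotropy $H':=K_0\cap H$ satisfying $(H')_0=H_0$. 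Transitivity of $K_0$ gives $K=K_0H$, so that $\Gamma:=K/K_0\cong H/H'$ acts by conjugation compatibly on $H^*(BK_0)$ and $H^*(BH')$, making the restriction $r_0\colon H^*(BK_0)\to H^*(BH')$ a $\Gamma$-equivariant map. Over $\RR$ the invariants functor $(-)^\Gamma$ is exact, and it carries $r_0$ to the map $r\colon H^*(BK)=H^*(BK_0)^\Gamma\to H^*(BH')^\Gamma=H^*(BH)$ in question; hence $r$ is injective (resp.\ surjective) as soon as $r_0$ is, while $\rk H=\rk H'$ and $\rk K=\rk K_0$. So we may assume $K$ connected.

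For connected $K$ the spectral sequence has only the two rows $E_2^{*,0}=H^*(BK)$ and $E_2^{*,m}\cong H^*(BK)\cdot\tau$, where $\tau$ generates $H^m(S^m)$. The single possibly nonzero differential is the transgression $d_{m+1}$, which by the Leibniz rule is, up to sign, multiplication by $\theta:=d_{m+1}(\tau)\in H^{m+1}(BK)$. I use throughout the identification $H^*(BK;\RR)=S(\mathfrak{t}^*)^{W(K)}$ of Remark \ref{assertion}: it exhibits $H^*(BK)$ as a subring of a polynomial ring, hence an integral domain, concentrated in even degrees, of Krull dimension $\rk K$.

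If $m$ is even, then $\theta$ lies in the odd-degree group $H^{m+1}(BK)=0$, so $d_{m+1}=0$ and the sequence collapses. The base edge homomorphism is $p^*$, and it is the inclusion of the bottom filtration layer $E_\infty^{*,0}=H^*(BK)$, hence injective; this proves (a). Moreover $H^*(BH)$ fits in a short exact sequence of $H^*(BK)$-modules with sub $E_\infty^{*,0}=H^*(BK)$ and quotient $E_\infty^{*,m}\cong H^*(BK)[m]$, both of dimension $\rk K$, so Lemma \ref{ineq}(i) gives $\dim_{H^*(BK)}H^*(BH)=\rk K$, which Lemma \ref{serre} identifies with $\dim_{H^*(BH)}H^*(BH)=\rk H$; thus $\rk H=\rk K$. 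If instead $m$ is odd, then $\theta$ cannot vanish: otherwise the sequence would collapse and the permanent cycle $\tau$ would contribute a nonzero class in the odd degree $m$ of $H^*(BH)$, contradicting the fact that $H^*(BH;\RR)$ is evenly graded. Since $H^*(BK)$ is a domain, the nonzero homogeneous element $\theta$ is a non-zero-divisor, so multiplication by $\theta$ is injective and the whole top row dies: $E_\infty^{*,m}=0$ and $E_\infty^{*,0}=H^*(BK)/(\theta)$. Hence the edge map gives a ring isomorphism $H^*(BH)\cong H^*(BK)/(\theta)$, so $p^*$ is surjective, proving (b); and cutting the domain $H^*(BK)$ by a single homogeneous non-zero-divisor lowers the Krull dimension by exactly one, giving $\rk H=\rk K-1$.

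The step I expect to be most delicate is the disconnected reduction: one must verify carefully that $K=K_0H$ (which is what forces $\Gamma\cong H/H'$ and makes $r_0$ genuinely $\Gamma$-equivariant for the conjugation actions) and that passing to $\Gamma$-invariants over $\RR$ both preserves surjectivity and injectivity and correctly computes $H^*(BK)$ and $H^*(BH)$. Once $K$ is connected everything is formal, the one essential input being that $H^*(BK;\RR)$ is an even-degree integral domain, so that the transgression class $\theta$, as soon as it is nonzero, is automatically a non-zero-divisor.
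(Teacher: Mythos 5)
Your proof is correct, but it takes a genuinely different route from the paper's. For connected groups the paper computes nothing: it quotes Samelson's Satz IV together with Borel's results for both the rank statements and the injectivity/surjectivity of $H^*(BK)\to H^*(BH)$, whereas you reprove the connected case from scratch via the Serre spectral sequence of $S^m\to BH\to BK$, with the transgression class $\theta$ and the fact that $H^*(BK;\RR)$ is an evenly graded domain doing all the work; your Krull-dimension bookkeeping via Lemmas \ref{ineq} and \ref{serre} is a legitimate self-contained substitute for the classical rank results. The more substantial divergence is in the disconnected reduction. The paper splits into three cases, and for $m=1$ its $\Gamma$-invariants argument breaks down because $S^1$ is not simply connected (so it cannot conclude $K_0\cap H=H_0$); it therefore resorts to an ad hoc Lie-algebra argument extending $H$-invariant polynomials on $\mfh$ to $K$-invariant polynomials on $\mfk=\mfh\oplus\RR v$. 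You sidestep this entirely by never insisting that the isotropy group of the connected group be connected: you work with $H'=K_0\cap H$, use $K=K_0H$ to get $\Gamma:=K/K_0\cong H/H'$, and your spectral-sequence argument is indifferent to whether $H'$ is connected, so the reduction is uniform for all $m\ge1$ (your $m=0$ argument via $K_0\subseteq H$ is likewise a touch more elementary than the paper's appeal to Weyl-group invariants). The delicate points you flagged are indeed the right ones, and they all check out: $K_0$ acts transitively because its finitely many equidimensional orbits cover the connected sphere, $H'$ is open in $H$ so $(H')_0=H_0$, and exactness of $(-)^\Gamma$ over $\RR$ transports injectivity and surjectivity from $r_0$ to $r$. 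In short, the paper buys brevity in the connected case at the cost of an extra $m=1$ case; you buy uniformity and self-containedness at the cost of redoing the spectral-sequence computation.
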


\begin{proof} In the case when both $K$ and $H$ are connected, the result follows from
\cite[Satz IV]{Sa} along with \cite[Section 21, Corollaire]{Bo} and \cite[Proposition 28.2]{Bo}.
From now on, $K$ or $H$ may be non-connected. We distinguish the following three situations.

\noindent{\it Case 1:} $m \ge 2$.   Let $K_0$ be the identity component of $K$.
The induced action of $K_0$   on $S^{m}$ is also transitive, because its orbits are open and closed in the $K$-orbits.
We deduce that we can identify $K_0/(K_0\cap H)$ with $S^{m}$. Since the latter sphere is simply connected,
 the long exact homotopy sequence of the bundle $K_0\cap H \to K_0 \to S^{m}$
shows that   $K_0\cap H$ is connected. This implies that $K_0\cap H$ is equal to $H_0$, the identity
component of $H$, and we have the identification $K_0/H_0 = S^{m}$.
This implies the assertions concerning the ranks.  

 Let us now consider the long exact homotopy sequence of the   
bundle $H \to K \to S^{m}$ and deduce from it that the map $\pi_0(H)\to \pi_0(K)$ is a bijection.
This means that $H$ and $K$ have the same number of connected components, thus we may set
$\Gamma := K/K_0 = H/H_0$. 
The (free) actions of $\Gamma$ on $BK_0$ and $BH_0$ induce the identifications
$BK=BK_0/\Gamma$, respectively $BH=BH_0/\Gamma$ (see e.g.~\cite[Ch.~III, Section 1]{Hs}). Moreover,  the  map
$\pi: BH_0\to BK_0$ is $\Gamma$-equivariant. The induced homomorphism $\pi^*:H^*(BK_0)\to H^*(BH_0)$ is then $\Gamma$-equivariant as well, hence it maps $\Gamma$-invariant elements to
$\Gamma$-invariant elements. 
We have $H^*(BH)=H^*(BH_0)^\Gamma$ and $H^*(BK)=H^*(BK_0)^\Gamma$ which shows that 
if $m$ is even, then the map $\pi^*|_{H^*(BH_0)^\Gamma}: H^*(BH_0)^\Gamma\to H^*(BK_0)^\Gamma$ is injective.
We will now show that if $m$ is odd, then the latter map is surjective.
Indeed,  since $\pi^*$ is surjective, for any $b \in H^*(BK_0)^\Gamma$ there exists $a \in H^*(BH_0)$ with
$\pi^*(a)=b$. But then  $a':=\frac{1}{|\Gamma|}\sum_{\gamma\in \Gamma} \gamma a$ is in
$H^*(BH_0)^\Gamma$ and satisfies $\pi^*(a')=b$.  

\noindent{\it Case 2:} $m = 1$. We clearly have $\rk H = \rk  K -1$ in this case. We only need to show
that the map $H^*(BK)\to H^*(BH)$ is surjective. Equivalently, using the identification with the rings of invariant polynomials \cite[p.~311]{Milnor}, we will show that the restriction map $S(\mfk^*)^K\to S(\mfh^*)^H$ is surjective. Choosing an $\Ad_{K}$-invariant scalar product on $\mfk$, we obtain an orthogonal decomposition $\mfk = \mfh \oplus \RR  v$, where $v\in \mfh^\perp$. The $\Ad$-invariance implies that $\mfh$ is an ideal in $\mfk$ and $v$ a central element. Given $f\in S(\mfh^*)^H$, we define $g:\mfk=\mfh \oplus \RR v\to \RR$ by $g(X+tv)=f(X)$. Clearly, $g$ is a polynomial on $\mfk$ that restricts to $f$ on $\mfh$; for the desired surjectivity we therefore only need to show that $g$ is $K$-invariant. As $K/H \cong S^1$ is connected, $K$ is generated by its identity component $K_0$ and $H$. Note that both the $H$- and the $K_0$-action respect the decomposition $\mfk = \mfh\oplus \RR  v$. The $H$-invariance of $f$ therefore implies the $H$-invariance of $g$. Also, the adjoint action of $\mfk$ on $\mfh$ is the same as the adjoint action of $\mfh$ (the $\RR  v$-summand acts trivially), so $f$ is $K_0$-invariant, which implies that $g$ is $K_0$-invariant. 

\noindent{\it Case 3:} $m=0$. Since $\rk H = \rk K$, a maximal torus $T\subset H$ is also maximal in $K$.
The injectivity of $H^*(BK)\to H^*(BH)$ follows from the identifications $H^*(BK)=S(\mft^*)^{W(K)}$,
$H^*(BH)=S(\mft^*)^{W(H)}$.

\end{proof}

\section{Cohomogeneity one actions are Cohen-Macaulay}\label{sec:pro}
In this section we  prove Theorem  \ref{thm:cohom1eqformal}.

There are two possibilities for the orbit space $M/G$:
it can be diffeomorphic to the circle $S^1$ or to the interval $[0,1]$. 
If $M/G = S^1$,
Theorem \ref{thm:cohom1eqformal}   follows readily from \cite[Corollary 4.3]{GR}
and the fact that all isotropy groups of the $G$-action are conjugate to each other.

From now on we assume that 
 $M/G = [0,1]$. We start with some well-known considerations which hold true in this case.
One can choose a $G$-invariant Riemannian metric on $M$ and a geodesic $\gamma$ perpendicular to the orbits such that $G\gamma(0)$ and $G\gamma(1)$ are the two nonregular orbits, and such that $\gamma(t)$ is regular for all $t\in (0,1)$. Let $K^-:=G_{\gamma(0)}$, $K^+:=G_{\gamma(1)}$, and $H$ be the regular isotropy $G_\gamma$ on $\gamma$. 
We have $H \subset K^{\pm}$.  The group diagram $G \supset K^-, K^+ \supset H$ determines the equivariant diffeomorphism type of the $G$-manifold $M$. More precisely, by the slice theorem,  the boundaries of   the unit disks $D_{\pm}$ in the 
normal spaces $\nu _{\gamma(0)} G\gamma(0)$, respectively $\nu_{\gamma(1)}G\gamma(1)$ are spheres $K^{\pm}/H = S^{\ell_{\pm}}$. 
The space $M$ can be realized by gluing the tubular neighborhoods 
$G/K^{\pm}\times_{K^{\pm}} D^{\pm}$ along their common boundary 
$G/K^{\pm}\times_{K^{\pm}}K^{\pm}/H=G/H$
(see e.g.~\cite[Theorem IV.8.2]{Br}, \cite[Section 1]{GWZ} or \cite[Section 1.1]{Hoelscher}).
We are  in a position to prove the main result of the paper in the remaining case:
%\noindent{\it Claim 1.} $\rk G/K = \rk K^-/K$.  

%Indeed,  a maximal torus in $G/K$ is of the form
%$TK/K \simeq T/T\cap K$, where  $T$ is a maximal torus in $G$
%(see e.g.~\cite[Exercise 8 (b)]{Di}).
%Similarly, a maximal torus in $K^-/K$ is of the form
%$T'K/K \simeq T'/T'\cap K$, where $T'$ is a maximal torus in $K^-$.
%But $T'$ is a maximal torus in $G$ as well, hence
%$T'=gTg^{-1}$, for some $g\in G$. Consequently,
%$$T'/T'\cap K = gTg^{-1}/(gTg^{-1}\cap K) =
%  gTg^{-1}/(g(T\cap K)g^{-1})\simeq T/T\cap K,$$
%  where we have used that $K$ is a normal subgroup of $G$.
%We deduce that the torus $T'/T'\cap K$ has the same dimension as
%  $T/T\cap K$, thus it is a maximal torus in $G/K$ as well. 

\begin{proof}[Proof of Theorem \ref{thm:cohom1eqformal}] \emph{in the case when $M/G=[0,1]$}. We may assume that the rank of any $G$-isotropy group
is at most equal to $b:= \rk K^-$, i.e.~ we have 
\begin{equation}\label{bi}\rk H \le \rk K^+\le b.\end{equation}  By Proposition \ref{samelson},  we have
$\rk K^- -\rk H \le 1$ and consequently $\rk H \in \{b-1, b\}$ (alternatively, we can use \cite[Lemma 1.1]{Puettmann}). If $\rk H = b$, then all isotropy groups
of the $G$-action have the same rank and Theorem \ref{thm:cohom1eqformal} follows from
\cite[Corollary 4.3]{GR}. From now on we will assume that 
\begin{equation}\label{bii}\rk H = b-1.\end{equation}
This implies that the  quotient $K^-/H$  is odd-dimensional, that is, $\ell_-$ is an odd integer.
By Proposition \ref{samelson} (b), the homomorphism 
$H^*(BK^-)\to H^*(BH)$ is surjective.
On the other hand, the Mayer-Vietoris sequence of the covering of $M$ with two tubular neighborhoods around the singular orbits 
can be expressed as follows:
\begin{equation*} 
 \ldots \longrightarrow H^*_G(M) \longrightarrow H^*_G(G/K^-)\oplus H^*_G(G/K^+) \longrightarrow H^*_G(G/H) \longrightarrow
 \ldots
\end{equation*}
But $H^*_G(G/K^{\pm})=H^*(BK^{\pm})$ and $H^*_G(G/H)=H^*(BH)$, hence the last map in the above sequence is
surjective. Thus the Mayer-Vietoris sequence splits into  short exact sequences of the form:
\begin{equation} \label{eq:MVseq}
0 \longrightarrow H^*_G(M) \longrightarrow H^*_G(G/K^-)\oplus H^*_G(G/K^+) \longrightarrow H^*_G(G/H) \longrightarrow 0.
\end{equation}
We analyze separately the  two situations imposed by equations (\ref{bi}) and (\ref{bii}).

\noindent{\it Case 1: $\rk K^+ = b$.}  By Proposition \ref{lem:cohomrank}, both $H^*_G(G/K^-)$ and
$H^*_G(G/K^+)$ are Cohen-Macaulay modules over $H^*(BG)$ of dimension $b$, so by Corollary \ref{sumofCM}, the middle term 
$ H^*_G(G/K^-)\oplus H^*_G(G/K^+)$ in the sequence \eqref{eq:MVseq} is also Cohen-Macaulay of dimension $b$.
Because $H^*_G(G/H)$ is Cohen-Macaulay of dimension $b-1$, we can apply Lemma \ref{ineq} to the sequence \eqref{eq:MVseq} to deduce that $b\leq \depth H^*_G(M)\leq \dim H^*_G(M)=b$, which implies that $H^*_G(M)$ is Cohen-Macaulay of dimension $b$.

\noindent {\it Case 2: $\rk K^+ = b-1$.} 
In this case, $H\subset K^+$ are Lie groups of equal rank
and therefore, by Proposition \ref{samelson} (a), the canonical map $H^*(BK^+)\to H^*(BH)$ is injective. Exactness of the sequence \eqref{eq:MVseq} thus implies that the restriction map $H^*_G(M) \to H^*_G(G/K^-)$ is injective. 
We deduce that the long exact sequence of the pair
$(M, G/K^-)$ splits into short exact sequences, i.e.~the following sequence is exact:
\begin{equation}\label{hgmb}
0 \longrightarrow H^*_G(M)\longrightarrow H^*_G(G/K^-) \longrightarrow H^*_G(M,G/K^-) \longrightarrow 0.
\end{equation}
We aim  to show that $H^*_G(M,G/K^-)$ is a Cohen-Macaulay module over $H^*(BG)$ of dimension $b-1$; once we have established this, we can apply Lemma \ref{ineq} to the sequence \eqref{hgmb} to deduce that $H^*_G(M)$ is Cohen-Macaulay of dimension $b$, in exactly the same way as we used the sequence \eqref{eq:MVseq} in Case 1 above.

By excision, $H^*_G(M,G/K^-)=H^*_G(G \times_{K^+} D^+, G/H)$, where $G \times_{K^+} D^+$ is a tubular neighborhood of $G/K^+$ and 
$G/H = G \times_{K^+} S^+$ is its boundary (i.e.~$S^+$ is the boundary of $D^+$). We have the isomorphism
\begin{align*}
H^*_G(G \times_{K^+} D^+,G/H) &= H^*(EG\times_G (G\times_{K^+} D^+), EG\times_G (G\times_{K^+} S^+)) \\
&= H^*(EG\times_{K^+} D^+,EG\times_{K^+} S^+)\\
& = H^*_{K^+}(D^+,S^+),
\end{align*}
which is $H^*(BG)$-linear, where the $H^*(BG)$-module structure on the right hand side is the restriction of the $H^*(BK^+)$-module structure to $H^*(BG)$. Because $H^*(BK^+)$ is a finitely generated module over $H^*(BG)$, it is sufficient to show that $H^*_{K^+}(D^+,S^+)$ is a Cohen-Macaulay module over $H^*(BK^+)$ of dimension $b-1$. Then  Lemma \ref{serre} implies the claim, because the ring $H^*(BK^+)$ is Noetherian
and *local (by \cite[Example 1.5.14 (b)]{BrunsHerzog}).

Denote by $K^+_0$ the connected component of $K^+$. There is a spectral sequence converging to $H^*_{K^+_0}(D^+,S^+)$ whose $E_2$-term equals $H^*(BK^+_0)\otimes H^*(D^+,S^+)$; because $H^*(D^+,S^+)$ is concentrated only in degree $\ell^++1$, this spectral sequence collapses at $E_2$, and it follows that $H^*_{K^+_0}(D^+,S^+)$ is a free module over $H^*(BK^+_0)$
(cf.~e.g.~\cite[Lemma C.24]{GGK}). In particular, it is a Cohen-Macaulay module over $H^*(BK^+_0)$ of dimension $b-1$. Because $H^*(BK^+_0)$ is finitely generated over $H^*(BK^+)$, Lemma \ref{serre} implies that $H^*_{K^+_0}(D^+,S^+)$ is also Cohen-Macaulay over $H^*(BK^+)$ of dimension $b-1$. 

A standard averaging argument (see for example \cite[Lemma 2.7]{GR}) shows that the $H^*(BK^+)$-module $H^*_{K^+}(D^+,S^+)=H^*_{K^+_0}(D^+,S^+)^{K^+/K^+_0}$ is a direct summand of $H^*_{K^+_0}(D^+,S^+)$: we have $H^*_{K^+_0}(D^+,S^+) = H^*_{K^+}(D^+,S^+) \oplus \ker a$, where $a:H^*_{K^+_0}(D^+,S^+)\to H^*_{K^+_0}(D^+,S^+)$ is given by averaging over the $K^+/K^+_0$-action. It follows that $H^*_{K^+}(D^+,S^+)$ is Cohen-Macaulay over $H^*(BK^+)$ of dimension $b-1$.

\end{proof}

Combining this theorem with Proposition \ref{gr}, we also immediately obtain Corollary \ref{cor:eqf}.

The following statement was shown in the above proof; we formulate it as a separate proposition because we will use it again later.
\begin{prop}\label{prop:mvseqexact} Whenever $M/G = [0,1]$ and  $\rk H = \rk K^- -1$, the sequence \eqref{eq:MVseq} is exact.
\end{prop}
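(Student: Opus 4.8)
The plan is to show that the Mayer--Vietoris sequence splits into the claimed short exact sequences \eqref{eq:MVseq} precisely when the last map in the long sequence is surjective, and to establish that surjectivity directly from the rank hypothesis $\rk H = \rk K^- - 1$. First I would write down the Mayer--Vietoris sequence for the covering of $M$ by the two tubular neighborhoods around the singular orbits, identifying the relevant equivariant cohomologies via $H^*_G(G/K^\pm) = H^*(BK^\pm)$ and $H^*_G(G/H) = H^*(BH)$, exactly as is done in the proof of Theorem \ref{thm:cohom1eqformal}. The crucial map is then
\[
H^*(BK^-) \oplus H^*(BK^+) \longrightarrow H^*(BH),
\]
and the entire content of the proposition is that this map is surjective under the stated rank condition.

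To get surjectivity, the key observation is that $\rk H = \rk K^- - 1$ forces $K^-/H \cong S^{\ell_-}$ to be an odd-dimensional sphere, so that Proposition \ref{samelson}(b) applies to the transitive $K^-$-action on this sphere and yields that the single canonical homomorphism $H^*(BK^-) \to H^*(BH)$ is already surjective. Since this restriction map factors through the Mayer--Vietoris connecting map (it is one of the two components of the map above), surjectivity of the full map on the direct sum follows immediately. This is the entire mechanism already present in the main proof, so the argument is essentially a matter of isolating it.

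The main point to be careful about is that $\rk H = \rk K^- - 1$ is exactly the hypothesis needed to land in the odd sphere case of Proposition \ref{samelson}, and that the labelling $b = \rk K^-$ is being used consistently with the convention in the proof of Theorem \ref{thm:cohom1eqformal}; I would note that no assumption on $\rk K^+$ is required here, since surjectivity is already furnished by the $K^-$ factor alone. Once the last map is known to be surjective, the long exact Mayer--Vietoris sequence breaks at every degree into the short exact sequences \eqref{eq:MVseq}, which is the assertion. Since all of this was carried out in the proof of Theorem \ref{thm:cohom1eqformal}, I expect there to be no genuine obstacle; the proposition is a bookkeeping extraction of a step already completed, and the only subtlety is ensuring that the surjectivity was established without secretly using the case distinction (Case 1 versus Case 2) on $\rk K^+$, which indeed it was not.
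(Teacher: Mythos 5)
Your proposal is correct and coincides with the paper's own argument: the hypothesis $\rk H=\rk K^- -1$ forces $\ell_-$ to be odd via Proposition \ref{samelson}, so Proposition \ref{samelson}(b) gives surjectivity of $H^*(BK^-)\to H^*(BH)$, hence of the last map in the Mayer--Vietoris sequence, which therefore splits into the short exact sequences \eqref{eq:MVseq}. Your observation that no hypothesis on $\rk K^+$ is needed is also exactly how the paper proceeds, since the splitting is established before the case distinction on $\rk K^+$.
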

An immediate corollary to this proposition is the following 
 Goresky-Kottwitz-MacPher\-son \cite{GKM} type description of the
ring $H^*_G(M)$ (we denote by ${\mathfrak k}^\pm$ and ${\mathfrak h}$
 the Lie algebras of $K^{\pm}$, respectively $H$).

\begin{cor}\label{cor:ifev}  If  $M/G=[0,1]$ and $\rk H = \rk K^- -1$  then $H^*_G(M)$ is isomorphic to the $S(\mfg^*)^G$-subalgebra of $S(({\mathfrak k}^-)^*)^{K^-}\oplus
S(({\mathfrak k}^+)^*)^{K^+}$ consisting of all  pairs $(f,g)$ with the property that $f|_{\mathfrak h}=g|_{\mathfrak h}$.
\end{cor}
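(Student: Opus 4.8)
The plan is to read the claimed description directly off the short exact sequence \eqref{eq:MVseq}, whose exactness is exactly the content of Proposition \ref{prop:mvseqexact}. Writing \eqref{eq:MVseq} as $0 \to H^*_G(M) \xrightarrow{\alpha} H^*_G(G/K^-)\oplus H^*_G(G/K^+) \xrightarrow{\beta} H^*_G(G/H) \to 0$, exactness says that $\alpha$ is injective with image equal to $\ker\beta$. Since $\alpha$ is induced by the inclusions $G/K^\pm \hookrightarrow M$, it is a homomorphism of $H^*(BG)$-algebras into the product algebra (with componentwise multiplication); hence $\alpha$ restricts to an isomorphism of $H^*(BG)$-algebras $H^*_G(M) \xrightarrow{\sim} \ker\beta$. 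Everything then reduces to identifying $\ker\beta$ explicitly.

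First I would translate each term into invariant polynomials via Remark \ref{assertion}: the Chern--Weil isomorphism together with the Chevalley restriction theorem give $H^*(BK^\pm) = S((\mathfrak{k}^\pm)^*)^{K^\pm}$ and $H^*(BH) = S(\mathfrak{h}^*)^H$, all as $S(\mathfrak{g}^*)^G$-algebras, and recall that $H^*_G(G/K^\pm)=H^*(BK^\pm)$, $H^*_G(G/H)=H^*(BH)$. Under these identifications, $\beta$ becomes the difference of the two maps $H^*(BK^\pm) \to H^*(BH)$ induced by the boundary inclusion $G/H \hookrightarrow G/K^\pm$, i.e.\ by the subgroup inclusions $H \subset K^\pm$.

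The key step is to verify that, under the Chern--Weil/Chevalley identification, the map $H^*(BK^\pm) \to H^*(BH)$ induced by $H \subset K^\pm$ is precisely the restriction of invariant polynomials $f \mapsto f|_{\mathfrak{h}}$. This is the naturality of the Chern--Weil homomorphism with respect to closed subgroup inclusions: for $H \subset K$ the square relating the topological restriction $H^*(BK)\to H^*(BH)$ to the algebraic restriction $S(\mathfrak{k}^*)^K \to S(\mathfrak{h}^*)^H$ commutes, since the classifying map $BH \to BK$ pulls back the universal connection to one whose curvature restricts accordingly. Granting this, $\beta$ is the map $(f,g) \mapsto f|_{\mathfrak{h}} - g|_{\mathfrak{h}}$, so $\ker\beta$ is exactly the $S(\mathfrak{g}^*)^G$-subalgebra of $S((\mathfrak{k}^-)^*)^{K^-}\oplus S((\mathfrak{k}^+)^*)^{K^+}$ consisting of all pairs $(f,g)$ with $f|_{\mathfrak{h}} = g|_{\mathfrak{h}}$, which is the assertion.

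The main obstacle is the naturality verification in the previous paragraph; once it is in place the rest is formal. In particular the algebra structure requires no extra work: $\alpha$ is a ring homomorphism, and $\ker\beta$ is a subalgebra of the product algebra because $\beta$ is itself a ring homomorphism, so the module isomorphism furnished by exactness is automatically an isomorphism of $S(\mathfrak{g}^*)^G$-algebras onto the stated subalgebra. The sign in the difference map is irrelevant here, since $\ker\beta$ is unchanged whether one uses $f|_{\mathfrak{h}}-g|_{\mathfrak{h}}$ or any nonzero scalar multiple.
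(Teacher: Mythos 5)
Your proposal is correct and is precisely the argument the paper intends: the corollary is stated as an immediate consequence of Proposition \ref{prop:mvseqexact}, namely that exactness of \eqref{eq:MVseq} identifies $H^*_G(M)$ with the kernel of the difference of restriction maps, which under the Chern--Weil/Chevalley identifications becomes the subalgebra of pairs agreeing on $\mathfrak h$. You have merely filled in the naturality of the Chern--Weil isomorphism, which the paper leaves implicit.
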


We end this section with an example which concerns Corollary \ref{<2}.
It shows that if the cohomogeneity of the group action is no longer smaller than two, then the action is not necessarily
Cohen-Macaulay.
   
\begin{ex}\label{exor} The $T^2$-manifold $R(1,0)$ appears in the classification of $T^2$-actions on $4$-manifolds by Orlik and Raymond \cite{Or-Ra}. It is a compact connected 4-manifold,
homeomorphic to the connected sum $(S^1\times S^3)\# (S^2\times S^2)$. The $T^2$-action on $R(1,0)$ is effective, therefore of cohomogeneity two, and has exactly two fixed points. The action is not Cohen-Macaulay, because otherwise it would be equivariantly formal (as it has fixed points); as the fixed point set is finite, this would imply using Proposition \ref{wealso} that $H^{\rm odd}(R(1,0)) =\{0\}$, contradicting $H^1(R(1,0))=\RR$.
\end{ex}

\section{Equivariantly formal actions of cohomogeneity one}\label{sec:equiv}

In this section we present some extra results in the situation when $M/G =[0,1]$ and the action $G\times M \to M$ is equivariantly formal.
By Corollary \ref{cor:eqf}, this is equivalent to the fact that the rank of at least one of $K^-$ and $K^+$ is equal to the rank of $G$.

%\newpage

\subsection{The case when $M$ is even-dimensional}

\subsubsection{Cohomogeneity-one manifolds with positive Euler characteristic}\label{subsec:ec}
A discussion concerning the Euler characteristic of a compact manifold (of arbitrary dimension) admitting a cohomogeneity one action of
a compact connected Lie group  
can be found in \cite[Section 1.2]{AlekseevskyPodesta}
(see also \cite[Section 1.3]{Fr}). By Proposition 1.2.1 therein we have 
\begin{equation}\label{eq:eulercharacteristic}
\chi(M) = \chi(G/K^-)+\chi(G/K^+)-\chi(G/H).
\end{equation}
The Euler characteristic $\chi(M)$ is always nonnegative, and one has $\chi(M)>0$ if and only if
$M/G=[0,1]$, $M$ is even-dimensional, and the rank of at least one  of $ K^-$ and $ K^+$   is equal to 
$\rk G$, see \cite[Corollary 1.2.2]{AlekseevskyPodesta}. What we can show with our methods is
that  $\chi(M)>0$ implies that  $H^{\rm odd}(M)=\{0\}$. %We would prefere to divide
%the proof into two steps, the first one being accomplished by the
%following lemma:

%We are now in a position to state and prove the main result of this subsection: 

\begin{prop}\label{cor:ifm} 
Assume that  a compact connected manifold $M$ admits a cohomogeneity one action of
a compact connected Lie group. Then the following  conditions
are equivalent:
\begin{enumerate}
\item[(i)] $\chi(M)>0$,
\item[(ii)] $H^{\rm odd}(M)=\{0\}$,
\item[(iii)] $M$ is even-dimensional, $M/G=[0,1]$, and the $G$-action is equivariantly formal.
\end{enumerate}
These conditions  imply that
\begin{equation}\label{hstar}\dim H^*(M) = \chi(M) =\chi(G/K^-) + \chi(G/K^+).
\end{equation}
\end{prop}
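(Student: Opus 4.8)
The plan is to prove the equivalence of the three conditions by establishing a cycle of implications, and then to derive the final dimension formula as a consequence. First I would prove (i)$\Leftrightarrow$(iii): by the cited result \cite[Corollary 1.2.2]{AlekseevskyPodesta}, the inequality $\chi(M)>0$ holds if and only if $M/G=[0,1]$, $M$ is even-dimensional, and at least one of $\rk K^-$, $\rk K^+$ equals $\rk G$; by Corollary \ref{cor:eqf}, this last rank condition is precisely equivariant formality of the $G$-action. This makes (i) and (iii) essentially a restatement of each other given the quoted classification input.

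Next I would establish (iii)$\Rightarrow$(ii). Here equivariant formality gives, via Proposition \ref{abo}(b), an isomorphism $H^*_G(M)\simeq H^*(M)\otimes H^*(BG)$ of $H^*(BG)$-modules, and from Proposition \ref{gr} the hypothesis $\Mmax\neq\emptyset$ holds. I would combine equivariant formality with the explicit Mayer--Vietoris description: using the short exact sequence \eqref{eq:MVseq} from Proposition \ref{prop:mvseqexact} (when $\rk H=\rk K^--1$) or the equal-rank situation, one can analyze the parity of the degrees appearing in $H^*_G(M)$. The cleaner route is to invoke Proposition \ref{wealso}: since $M$ is even-dimensional and the action is equivariantly formal, I would argue that the $T$-fixed point set is finite (this should follow from the classification of cohomogeneity one actions with $\chi(M)>0$, where the fixed points correspond to isolated orbits) and then conclude $H^{\rm odd}(M)=\{0\}$ directly.

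For (ii)$\Rightarrow$(i), I would use the fact that $H^{\rm odd}(M)=\{0\}$ forces $\chi(M)=\dim H^*(M)>0$, since the manifold is connected and hence has nonzero cohomology in even degrees; this gives $\chi(M)>0$ immediately. The remaining work is the dimension formula \eqref{hstar}. Assuming the equivalent conditions hold, equivariant formality and Proposition \ref{abo} give $\dim H^*(M)=\chi(M)$ once we know $H^{\rm odd}(M)=\{0\}$ (all cohomology is even, so total dimension equals Euler characteristic). To obtain the last equality in \eqref{hstar}, I would apply formula \eqref{eq:eulercharacteristic}, namely $\chi(M)=\chi(G/K^-)+\chi(G/K^+)-\chi(G/H)$, and show that $\chi(G/H)=0$: since $\rk H<\rk G$ in this even-dimensional setting (the regular isotropy has strictly smaller rank, as $K^\pm/H$ is a positive-dimensional sphere), the homogeneous space $G/H$ has vanishing Euler characteristic by the classical Hopf--Samelson criterion \cite{Ho-Sa}.

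The main obstacle I expect is the step (iii)$\Rightarrow$(ii), specifically controlling the parity of $H^*(M)$. The subtlety is that equivariant formality plus even-dimensionality does not by itself immediately give $H^{\rm odd}(M)=\{0\}$; one genuinely needs either the finiteness of $M^T$ (to invoke Proposition \ref{wealso}) or a direct parity analysis of the Mayer--Vietoris sequence \eqref{eq:MVseq}. Verifying that $M^T$ is finite, or alternatively tracking the even/odd grading through the exact sequence relating $H^*_G(M)$ to $H^*(BK^\pm)$ and $H^*(BH)$, is where the real care is required, since the homogeneous spaces $G/K^\pm$ and $G/H$ must be shown to contribute only in the appropriate parities.
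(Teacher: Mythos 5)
Your proposal is correct and follows essentially the same route as the paper: (i)$\Leftrightarrow$(iii) via the cited Alekseevsky--Podest\`a criterion together with Corollary \ref{cor:eqf}, (iii)$\Rightarrow$(ii) via finiteness of $M^T$ and Proposition \ref{wealso} (with (ii)$\Rightarrow$(i) trivial since $H^0(M)\neq 0$), and Equation \eqref{hstar} from \eqref{eq:eulercharacteristic} with $\chi(G/H)=0$. The one step you flag as delicate---finiteness of $M^T$---needs no appeal to a classification: since $\rk H=\rk G-1$, one has $M^T\subseteq \Mmax\subseteq G/K^-\cup G/K^+$, and the $T$-fixed set of a homogeneous space whose isotropy group has full rank is finite, which is exactly how the paper disposes of it.
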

\begin{proof} The equivalence of (i) and (iii) follows from the previous considerations and Corollary
\ref{cor:eqf}.
Let us now assume that $\chi(M) >0$. Then (iii) holds:
 consequently, $\rk H = \rk G-1$ and the space $\Mmax$ is either $G/K^-$, $G/K^+$ or their union $G/K^-\cup G/K^+$. A maximal torus $T\subset G$ therefore acts on $M$ with fixed point set $M^T$ equal to $(G/K^-)^T$, $(G/K^+)^T$ or $(G/K^-)^T\cup (G/K^+)^T$, depending on the rank of $K^-$ and $K^+$. As $M^T$ is in particular finite, the equivariant formality of the $G$-action is the same as the condition $H^{\rm odd}(M)=\{0\}$ (see Proposition \ref{wealso}), and
 (ii) follows. 
 The last assertion in the corollary 
 follows readily from Equation \eqref{eq:eulercharacteristic}. 
\end{proof}

\begin{rem}\label{r1}   
  Note that by the classical result of Hopf and Samelson \cite{Ho-Sa} concerning the Euler-Poincar\'e characteristic of a compact
  homogeneous space, along with the formula of  Borel given by Equation \eqref{borel} below, the topological obstruction given by the equivalence of (i) and (ii) holds true also for homogeneity, i.e., the existence of a transitive action of a compact Lie group.  
  
  We remark that the equivalence of (i) and (ii) is no
  longer true in the case when $M$ only admits an action of cohomogeneity two. Consider for example the cohomogeneity two $T^2$-manifold $R(1,0)$ 
  mentioned in Example \ref{exor}: since it is homeomorphic to
   $(S^1 \times S^3) \# (S^2 \times S^2)$, its Euler characteristic is equal to $2$, and
  the first cohomology group is $\RR$. 
\end{rem}

\begin{rem} The Euler characteristics of the nonregular orbits appearing in Equation \eqref{hstar} can also be expressed in terms of the occurring Weyl groups: we have $\chi(G/K^\pm)\neq 0$ if and only if $\rk K^\pm = \rk G$, and in this case $\chi(G/K^\pm)=\frac{|W(G)|}{|W(K^\pm)|}$. 
\end{rem}

\begin{rem}\label{rii} By a theorem of Grove and Halperin \cite{GH}, if $M$ admits a cohomogeneity-one action, then
the rational homotopy  $\pi_*(M)\otimes \QQ$ is finite-dimensional. This result also implies the equivalence between $\chi(M)>0$ and $H^{\rm odd}(M)=0$, as follows: 
If $\chi(M)>0$, then one can show using the Seifert-van Kampen theorem that $\pi_1(M)$ is finite, see Lemma \ref{pi1finite} below. Therefore the universal cover of $M$, call it $\widetilde{M}$, is also compact, and carries a cohomogeneity one action with orbit space $[0,1]$. The theorem of Grove and Halperin then states that  $\widetilde{M}$ is rationally elliptic.
Since the covering $\widetilde{M}\to M$ is finite, we have $\chi(\widetilde{M})>0$, and by \cite[Corollary 1]{Halperin} (or \cite[Proposition 32.16]{FHT}), this implies $H^{\rm odd}(\widetilde{M})=\{0\}$.
But $M$ is the quotient of $\widetilde{M}$ by a finite group, say $\Gamma$, hence
$H^{\rm odd}(M)=H^{\rm odd}(\widetilde{M})^\Gamma =\{0\}.$ 

Note that by Proposition \ref{wealso} this line of argument also implies that the $G$-action is equivariantly formal, and hence provides an alternative proof of Corollary \ref{cor:eqf} in the case of an even-dimensional cohomogeneity-one manifold $M$ with $M/G=[0,1]$.
\end{rem} 

\begin{lem}\label{pi1finite}
Assume that  a compact connected manifold $M$ admits a cohomogeneity one action of
a compact connected Lie group. If $\chi(M) >0$  then $\pi_1(M)$ is finite.
\end{lem}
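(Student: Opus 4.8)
The plan is to prove that $\pi_1(M)$ is finite under the hypothesis $\chi(M)>0$ by exploiting the gluing description of the cohomogeneity one manifold together with the Seifert--van Kampen theorem, which is exactly the tool flagged in Remark~\ref{rii}. By Proposition~\ref{cor:ifm}, the assumption $\chi(M)>0$ forces $M/G=[0,1]$, so $M$ is built by gluing the two tubular neighborhoods $G\times_{K^\pm}D^\pm$ of the singular orbits along their common boundary $G/H$. Each tube deformation retracts onto its core singular orbit $G/K^\pm$, and the boundary is the regular orbit $G/H$. Applying Seifert--van Kampen to this open cover yields a presentation of $\pi_1(M)$ as a pushout (amalgamated product) of $\pi_1(G/K^-)$ and $\pi_1(G/K^+)$ over $\pi_1(G/H)$.

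The next step is to show that each of the three groups $\pi_1(G/K^\pm)$ and $\pi_1(G/H)$ is finite. Here I would use the fibration $K \to G \to G/K$ for a closed subgroup $K\subset G$, whose long exact homotopy sequence gives the exact sequence $\pi_1(G)\to \pi_1(G/K)\to \pi_0(K)\to \pi_0(G)$. Since $G$ is a compact connected Lie group, $\pi_1(G)$ is finite (it is finitely generated abelian of rank equal to the dimension of the center, so one must invoke $\chi(M)>0$ to control this). This is where the Euler characteristic hypothesis enters decisively: by Proposition~\ref{cor:ifm} the condition $\chi(M)>0$ is equivalent to equivariant formality with at least one of $K^-,K^+$ of maximal rank, and in fact it guarantees $\rk H=\rk G-1$ and that the relevant homogeneous spaces have positive Euler characteristic. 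A homogeneous space $G/K$ with $\chi(G/K)>0$ (equivalently $\rk K=\rk G$) has finite $\pi_1$, since then $G$ and $K$ have equal rank and a maximal torus of $G$ sits inside $K$, forcing $\pi_1(G/K)$ finite via the rank condition on the center. For the remaining pieces one argues that the ranks being controlled by $b=\rk K^-$ and $\rk H=b-1$ keeps all fundamental groups finite.

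The final step is to conclude finiteness of the amalgamated product. In general an amalgamated product of finite groups need not be finite, so this is the main obstacle: one cannot simply invoke finiteness of the factors. The resolution is that the map $\pi_1(G/H)\to \pi_1(G/K^\pm)$ induced by the orbit inclusion $G/H\to G/K^\pm$ is surjective. This surjectivity comes from the long exact homotopy sequence of the bundle $K^\pm/H\to G/H\to G/K^\pm$, where the fiber $K^\pm/H=S^{\ell_\pm}$ is a sphere; since a sphere $S^{\ell}$ with $\ell\ge 2$ has $\pi_1=0$ and $\pi_0$ trivial, the map $\pi_1(G/H)\to\pi_1(G/K^\pm)$ is surjective (and when $\ell_\pm=1$ one checks the same conclusion directly). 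When both amalgamating maps from $\pi_1(G/H)$ are surjective, the pushout is a quotient of either factor $\pi_1(G/K^\pm)$, hence finite. I expect the delicate point to be the careful bookkeeping of the non-connectedness of $K^\pm$ and $H$ and the low-dimensional sphere cases $\ell_\pm\in\{0,1\}$, but the structural heart of the argument is precisely that the spherical slices kill enough of the boundary fundamental group to make the amalgamated product collapse onto a finite quotient.
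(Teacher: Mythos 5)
Your overall strategy is the same as the paper's: decompose $M$ into the two tubular neighborhoods, apply Seifert--van Kampen to get $\pi_1(M)$ as an amalgamated product of $\pi_1(G/K^-)$ and $\pi_1(G/K^+)$ over $\pi_1(G/H)$, and use the sphere bundles $K^\pm/H\to G/H\to G/K^\pm$ to control the amalgamating maps. However, there are two problems. The lesser one: your intermediate claim that all three groups $\pi_1(G/K^\pm)$ and $\pi_1(G/H)$ are finite is false, and the supporting assertion that $\pi_1(G)$ is finite for a compact connected Lie group is wrong ($\pi_1(U(n))=\ZZ$; already for the rotation action of $S^1$ on $S^2$ the regular orbit is $S^1$ with $\pi_1=\ZZ$). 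Since $G/H$ is odd-dimensional here, $H$ never has maximal rank, and only the singular orbit whose isotropy group has rank equal to $\rk G$ is guaranteed to have finite fundamental group. Fortunately this claim is also unnecessary for your concluding step, which only needs one finite factor.

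The genuine gap is in that concluding step. Your mechanism requires \emph{both} maps $\pi_1(G/H)\to\pi_1(G/K^\pm)$ to be surjective. This holds whenever $\ell_\pm\ge 1$ (the long exact homotopy sequence of the sphere bundle gives it for $\ell=1$ as well as $\ell\ge 2$), but it fails when $\ell_+=0$: then $K^+/H=S^0$ and $G/H\to G/K^+$ is a double covering, so the induced map on $\pi_1$ is injective with image of index two, not surjective. You flag $\ell_\pm\in\{0,1\}$ as delicate but only resolve $\ell=1$; the case $\ell=0$ genuinely occurs under the hypothesis $\chi(M)>0$ (e.g.\ the standard $S^1$-action on $\RR P^2$, where the exceptional orbit has $K^+/H=S^0$), and there the amalgamated product does not collapse onto a quotient of $\pi_1(G/K^+)$ by your argument. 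The paper's proof treats this as a separate case: writing $\rho^\pm:G/H\to G/K^\pm$ for the projections and assuming $\rk K^-=\rk G$, one shows that $\rho^+_*(\ker\rho^-_*)$ has finite index in $\pi_1(G/K^+)$ (namely $2\,|\pi_1(G/K^-)|$, using that $\ker\rho^-_*$ has index $|\pi_1(G/K^-)|$ in $\pi_1(G/H)$ and that $\rho^+_*$ is injective of index two), so the quotient of $\pi_1(G/K^+)$ by the normal closure of this subgroup is still finite. Without an argument of this kind your proof is incomplete.
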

\begin{proof} We may assume that $\rk K^- = \rk G$.
Consider the tubular neighborhoods of the two non-regular orbits
$G/K^+$, respectively $G/K^-$ mentioned before: their union is the whole $M$ and their intersection is $G$-homotopic to a principal orbit $G/H$.
The inclusions of the intersection into each of the two neighborhoods 
induce between the first homotopy groups the same maps as those  induced 
by the canonical projections $\rho^+:G/H\to G/K^+$, respectively 
$\rho^-:G/H \to G/K^-$ (see \cite[Section 1.1]{Hoelscher}). These are the maps $\rho^+_*:\pi_1(G/H)\to \pi_1(G/K^+)$ and
$\rho^-_* : \pi_1(G/H) \to \pi_1(G/K^-)$. Consider the bundle
$G/H\to G/K^-$ whose fiber is  $K^-/H=S^{\ell_-}$
of odd dimension $\ell_- =\dim K^- - \dim H \ge 1$.   The long exact homotopy sequence
of this bundle implies readily that the map $\rho^-_*$ is surjective.
From the Seifert-van Kampen theorem we deduce that $\pi_1(M)$ is isomorphic
to $\pi_1(G/K^+)/A$, where $A$ is the smallest normal subgroup of
$\pi_1(G/K^+)$ which contains $\rho^+_*(\ker \rho^-_*)$
(see e.g.~\cite[Exercise 2, p.~433]{Mu}).

\noindent {\it Case 1: $\dim K^+-\dim H \ge 1$.} As above, this implies
that $\rho_*^+$ is surjective. Consequently, the map
$\pi_1(G/K^-)=\pi_1(G/H)/\ker \rho_*^- \to \pi_1(G/K^+)/A$ induced
by $\rho^+_*$ is surjective as well. On the other hand, 
$\rk G = \rk K^-$ implies that $\pi_1(G/K^-)$ is a finite group.
Thus, $\pi_1(G/K^+)/A$ is a finite group as well.

\noindent {\it Case 2: $\dim K^+=\dim H$.} We have $K^+/H=S^0$, which
consists of two points, thus $\rho^+$ is a double covering.
This implies that $\rho^+_*$ is injective and its image,
$\rho^+_*(\pi_1(G/H))$, is a subgroup of index two in $\pi_1(G/K^+)$.
The index of $\rho^+_*(\ker \rho^-_*)$ in $\rho^+_*(\pi_1(G/H))$ is equal to the
index of $\ker \rho^-_*$ in $\pi_1(G/H)$, which is finite (being equal to the cardinality
of $\pi_1(G/K^-)$). Thus, the quotient $\pi_1(G/K^+)/\rho^+_*(\ker \rho^-_*)$
is a finite set. Finally, we only need to take into account that the canonical projection map $\pi_1(G/K^+)/\rho^+_*(\ker \rho^-_*)\to \pi_1(G/K^+)/A$ is surjective.
\end{proof}

\begin{rem}
Assume again that $M/G=[0,1]$ and $M$ is even-dimensional. 
If $M$ admits a  metric of positive sectional curvature, then at least one of $K^-$ and $K^+$ has maximal rank, by the so-called ``Rank Lemma" (see \cite[Lemma 2.5]{Grove} or \cite[Lemma 2.1]{GWZ}). By Corollary  \ref{cor:eqf} and Proposition \ref{cor:ifm}, the $G$-action is equivariantly formal and $H^{\rm odd}(M)=\{0\}$. This however is not a new result as by Verdiani \cite{Verdiani}, $M$ is already covered by a rank one symmetric space. 
\end{rem}

\subsubsection{Cohomology}
Consider again the situation that $M$ is a compact even-dimen\-sional manifold admitting a cohomogeneity one action of a compact connected Lie group $G$ such that $M/G=[0,1]$ and that at least one isotropy rank equals the rank of $G$. In this section we will give a complete description of the Poincar\'e polynomial of $M$, purely in terms of $G$ and the occurring isotropy groups, and eventually even of the ring $H^*(M)$.

\begin{prop}\label{cor:cohom} If $M$ is even-dimensional, $M/G=[0,1]$,  and the
rank of at least one of $K^-$ and $K^+$ equals the rank of $G$, 
then the Poincar\'e polynomial of $M$ is given by
$$P_t(M) = \frac{P_t(BK^-) + P_t(BK^+) - P_t(BH)}{P_t(BG)}.$$
In particular, $P_t(M)$ only depends on the abstract Lie groups $G,K^\pm,H$, and not on the whole group diagram.
\end{prop}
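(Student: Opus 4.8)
The plan is to compute the Poincaré polynomial $P_t(M)$ by combining the equivariant formality of the $G$-action with the short exact Mayer–Vietoris sequence \eqref{eq:MVseq}. Under the hypotheses, Corollary \ref{cor:eqf} and Proposition \ref{cor:ifm} guarantee that the action is equivariantly formal, so by Proposition \ref{abo}(b) we have an isomorphism of $H^*(BG)$-modules $H^*_G(M)\simeq H^*(M)\otimes H^*(BG)$. Passing to Poincaré series, this gives the fundamental relation $P_t(H^*_G(M)) = P_t(M)\cdot P_t(BG)$, where on the left I mean the Poincaré series of $H^*_G(M)$ as a graded vector space. Since $P_t(BG)$ has nonzero constant term, it is invertible as a formal power series, and solving for $P_t(M)$ reduces the problem to computing $P_t(H^*_G(M))$.

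For the latter I would use the short exact sequence \eqref{eq:MVseq}. First I must ensure this sequence is available: if $\rk H = \rk G$ then all isotropy ranks are maximal and one argues more directly, but under the stated hypothesis the relevant case is $\rk H = \rk G - 1$ (which holds because $\ell_-$ or $\ell_+$ is odd by Proposition \ref{samelson}), and then Proposition \ref{prop:mvseqexact} yields exactness of \eqref{eq:MVseq}. Poincaré series are additive on short exact sequences of graded vector spaces, so from
\begin{equation*}
0 \longrightarrow H^*_G(M) \longrightarrow H^*(BK^-)\oplus H^*(BK^+) \longrightarrow H^*(BH) \longrightarrow 0
\end{equation*}
I obtain $P_t(H^*_G(M)) = P_t(BK^-) + P_t(BK^+) - P_t(BH)$, using that $H^*_G(G/K^\pm) = H^*(BK^\pm)$ and $H^*_G(G/H)=H^*(BH)$. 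Dividing by $P_t(BG)$ then delivers the claimed formula.

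The one point requiring care is the treatment of the subcase $\rk H = \rk G$, in which both regular and singular isotropy ranks are maximal; there \eqref{eq:MVseq} may fail to be exact on the left, so I would instead verify the formula by checking that the alternating sum of Poincaré series still matches. In that subcase the action is equivariantly formal with $M^T$ finite, and one can compute $P_t(M)$ directly via Proposition \ref{abo}(e) together with $\chi(M) = \chi(G/K^-)+\chi(G/K^+)-\chi(G/H)$ from \eqref{eq:eulercharacteristic}. I expect the main obstacle to be precisely this case distinction: keeping the bookkeeping of ranks straight so that the single displayed formula holds uniformly, rather than the mechanical passage from exact sequences to Poincaré series, which is routine. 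Finally, the ``in particular'' clause is immediate, since each of $P_t(BK^-)$, $P_t(BK^+)$, $P_t(BH)$, $P_t(BG)$ depends only on the isomorphism type of the respective abstract group, not on how $H, K^\pm$ sit inside $G$.
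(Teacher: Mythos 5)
Your argument is correct and is essentially the paper's proof: equivariant formality (Proposition \ref{abo}(b)) gives $P_t^G(M)=P_t(M)\cdot P_t(BG)$, while the short exact sequence \eqref{eq:MVseq} gives $P_t^G(M)=P_t(BK^-)+P_t(BK^+)-P_t(BH)$, and one divides. The only thing to clean up is your hedging about a possible subcase $\rk H=\rk G$: that case is vacuous, because $M$ even-dimensional makes $G/H$ odd-dimensional, and (taking WLOG $\rk K^-=\rk G$) $G/K^-$ is then even-dimensional, so $\ell_-=\dim G/H-\dim G/K^-$ is odd and Proposition \ref{samelson}(b) forces $\rk H=\rk K^--1=\rk G-1$; hence \eqref{eq:MVseq} is always exact here and no case distinction is needed. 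Note also that your proposed fallback for that (nonexistent) case would only recover the single number $\dim H^*(M)=\chi(M)$, not the graded Poincar\'e polynomial, so it would not have sufficed had the case actually arisen.
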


\begin{proof}  Assume that $\rk K^-=\rk G$. Since the pincipal orbit $G/H$ is odd-dimensional, 
and  $\rk H\in \{\rk K^-, \rk K^--1\}$ (see Proposition \ref{samelson}), we actually have $\rk H = \rk K^- -1$. Hence by Proposition \ref{prop:mvseqexact}, the Mayer-Vietoris sequence \eqref{eq:MVseq} is exact, which implies  that the $G$-equivariant Poincar\'e series of
$M$ is
$$P_t^G(M)= P_t^G(G/K^-) + P_t^G(G/K^+) - P_t^G(G/H)
        = P_t(BK^-) + P_t(BK^+) - P_t(BH).$$
We only need to observe that, since the $G$-action on $M$ is equivariantly formal, 
Proposition \ref{abo} (b) implies that
        $P_t^G(M) = P_t(M) \cdot P_t(BG)$.
\end{proof}

\begin{rem}\label{rem:PoincareSeriesSimplified} In  case  $H$ is connected, the  above description of $P_t(M)$ can be simplified 
as follows. Assume that $\rk K^- = \rk G$. Then $\rk H = \rk K^- - 1$, and we have that $K^-/H= S^{\ell_-}$ is an odd-dimensional sphere. Consequently, the Gysin sequence of
the spherical bundle $K^-/H\to BH \to BK^-$ splits   into short exact sequences, which
implies readily that $P_t(BH)=(1-t^{\ell_- +1})P_t(BK^-)$.
Similarly, if also the rank of $K^+$ is equal to the rank of $G$, then
$P_t(BH)=(1-t^{\ell_++1})P_t(BK^+)$ and we obtain the following formula:
$$P_t(M) = \left(\frac{1}{1-t^{\ell_-+1}}+\frac{1}{1-t^{\ell_++1}} -1\right)\cdot \frac{P_t(BH)}{P_t(BG)}.$$  
\end{rem}

Numerous examples of cohomogeneity one actions which satisfy the hypotheses 
of Proposition \ref{cor:cohom} can be found in  \cite{AlekseevskyPodesta} and
\cite{Fr} (since the condition on the isotropy ranks in Proposition \ref{cor:cohom} is equivalent  to $\chi(M)>0$, see Proposition \ref{cor:ifm}). 
Proposition \ref{cor:cohom} allows us to calculate the cohomology groups of
$M$ in all these examples. We will do in detail one such example:

\begin{ex} (\cite[Table 4.2, line 5]{AlekseevskyPodesta}) We have $G=SO(2n+1)$, 
$K^-= SO(2n)$, $K^+=SO(2n-1)\times SO(2)$,
and $H=SO(2n-1)$. The following can be found in \cite[Ch.~III, Theorem 
3.19]{Mi-To}:
\begin{align*}
{}&P_t(BSO(2n+1))= \frac{1}{(1-t^4)(1-t^8) \cdots (1-t^{4n})}.%, \\ 
%{}& P_t(BSO(2n)) =\frac{1}{(1-t^{2n})(1-t^4)(1-t^8)\ldots (1-t^{4n-4})}.
\end{align*}
We have $\ell_- = 2n-1$ and $\ell_+=1$, and consequently, using Remark \ref{rem:PoincareSeriesSimplified}, we obtain the following description of the Poincar\'e
series of the corresponding manifold $M$:  
\begin{align*}
P_t(M)&=\left(\frac1{1-t^{2n}} + \frac1{1-t^2} -1\right) \cdot (1-t^{4n}) \\
&=1 + t^2 + t^4 + \ldots  + t^{2n-2} + 2t^{2n} + t^{2n+2} + \ldots + t^{4n}.
\end{align*}
\end{ex}

Let us now use equivariant cohomology to determine the ring structure of $H^*(M)$ in the case at hand. In Corollary \ref{cor:ifev} we determined the $S(\mfg^*)^G$-algebra structure of $H^*_G(M)$, and because of  Proposition \ref{abo} (c), the ring structure of $H^*(M)$ is  encoded in the $S(\mfg^*)^G$-algebra structure. The following proposition follows immediately.
\begin{prop}\label{prop:ringstructure} If $M$ is even-dimensional, $M/G=[0,1]$, and the rank of at least one of $K^-$ and $K^+$ equals the rank of $G$, then we have a ring isomorphism
\[
H^*(M) \simeq {\mathbb R}\otimes_{S(\mfg^*)^G}A,
\]
where $A$ is the $S(\mfg^*)^G$-subalgebra of $S((\mfk^-)^*)^{K^+}\oplus S((\mfk^+)^*)^{K^+}$ consisting of all pairs $(f,g)$ with the property that $\left.f\right|_{\mfh}=\left.g\right|_{\mfh}$.
\end{prop}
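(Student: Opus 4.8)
The strategy is to feed the Cohen--Macaulay data already established in Section~\ref{sec:pro} into the characterization of equivariant formality supplied by Proposition~\ref{abo}. By hypothesis $M$ is even-dimensional with $M/G=[0,1]$ and at least one isotropy rank is maximal, so Corollary~\ref{cor:eqf} tells us the $G$-action is equivariantly formal. The whole point of Proposition~\ref{abo}(c) is that equivariant formality guarantees that the canonical homomorphism $H^*_G(M)\to H^*(M)$ descends to a \emph{ring} isomorphism $\mathbb{R}\otimes_{H^*(BG)}H^*_G(M)\to H^*(M)$. Thus, modulo identifying the left-hand side, the result is immediate, and no new geometric input is required beyond what the previous sections have assembled.

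\emph{Key steps.} First I would invoke Corollary~\ref{cor:eqf} to record that the action is equivariantly formal, and Proposition~\ref{abo}(c) to obtain the ring isomorphism $H^*(M)\simeq \mathbb{R}\otimes_{H^*(BG)}H^*_G(M)$; here I identify $H^*(BG)=S(\mfg^*)^G$ via the Chern--Weil isomorphism, as recalled in Remark~\ref{assertion}. Second, since maximal isotropy rank together with $M/G=[0,1]$ and odd-dimensionality of $G/H$ forces $\rk H=\rk K^- -1$ (Proposition~\ref{samelson}, exactly as argued in the proof of Proposition~\ref{cor:cohom}), the hypothesis of Corollary~\ref{cor:ifev} is met. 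That corollary already identifies $H^*_G(M)$, as an $S(\mfg^*)^G$-algebra, with the subalgebra $A$ of $S((\mfk^-)^*)^{K^-}\oplus S((\mfk^+)^*)^{K^+}$ consisting of pairs $(f,g)$ with $f|_{\mfh}=g|_{\mfh}$. Substituting this description of $H^*_G(M)\simeq A$ into the isomorphism from the first step yields $H^*(M)\simeq \mathbb{R}\otimes_{S(\mfg^*)^G}A$, which is exactly the claim.

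\emph{Main obstacle.} Because Corollary~\ref{cor:ifev} is stated as an isomorphism of $S(\mfg^*)^G$-\emph{algebras}, the only genuine point requiring care is that the isomorphism $H^*(M)\simeq \mathbb{R}\otimes_{S(\mfg^*)^G}H^*_G(M)$ really is multiplicative, not merely additive --- that is, that the residual ring structure is the one transported from the cup product on $H^*_G(M)$. This is precisely the content of the final sentence of Proposition~\ref{abo}(c), so the substance of the argument reduces to citing that part of the proposition and noting that forming $\mathbb{R}\otimes_{S(\mfg^*)^G}(-)$ is functorial for $S(\mfg^*)^G$-algebra homomorphisms, so that the algebra isomorphism $H^*_G(M)\simeq A$ descends to a ring isomorphism after base change. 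I would also remark, for transparency, that since $H^*(BG)$ and $H^*(BK^{\pm})$ are concentrated in even degrees the subalgebra $A$ is a graded commutative ring in the ordinary sense, so no sign subtleties intervene. With these observations the proof is a short chain of citations, as the \textbf{immediate} in the author's phrasing suggests.
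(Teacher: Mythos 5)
Your proposal is correct and follows exactly the route the paper takes: the paper derives Proposition \ref{prop:ringstructure} "immediately" by combining the $S(\mfg^*)^G$-algebra description of $H^*_G(M)$ from Corollary \ref{cor:ifev} with the ring isomorphism ${\mathbb R}\otimes_{H^*(BG)}H^*_G(M)\simeq H^*(M)$ of Proposition \ref{abo}(c), which applies because the action is equivariantly formal by Corollary \ref{cor:eqf}. Your additional checks (that $\rk H=\rk K^--1$ so Corollary \ref{cor:ifev} applies, and that the base change is multiplicative) are exactly the points the paper leaves implicit.
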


\begin{rem}
Recall that the cohomology ring of a homogeneous space $G/K$ where $G, K$ are compact and connected, 
such that $\rk G = \rk K$,
is described by Borel's formula \cite{Bo} as follows:
\begin{equation}\label{borel} H^*(G/K) \simeq {\mathbb R}\otimes_{S(\mfg^*)^G} S(\mfk^*)^K.\end{equation} 
The above description of the ring $H^*(M)$ can be considered as
a version of   Borel's formula for cohomogeneity one manifolds.
\end{rem}
Equation (\ref{borel}) is particularly simple in the case when $K=T$, a maximal torus in $G$.
Namely, if ${\mathfrak t}$ is the Lie algebra of $T$ and $W(G)$ the Weyl group of $G$, then
$$H^*(G/T) \simeq S({\mathfrak t}^*)/\langle S({\mathfrak t}^*)^{W(G)}_{>0}\rangle,$$
where $\langle S({\mathfrak t}^*)^{W(G)}_{>0}\rangle$ is the ideal of $S({\mathfrak t}^*)$ generated by the non-constant $W(G)$-invariant polynomials. The following example describes the cohomology ring of a space that can be considered
the cohomogeneity one analogue of $G/T$. It also shows that the ring $H^*(M)$ depends on the group diagram, not only on the isomorphism types
of  $G$, $K^\pm$, and $H$, like the Poincar\'e series, see Proposition \ref{cor:cohom} above.

\begin{ex} Let $G$ be a compact connected Lie group, $T$  a maximal torus in $G$,
 and $H\subset T$ a codimension one subtorus.
The cohomogeneity one manifold corresponding to $G$, $K^-=K^+:=T$, and $H$ is
$M=G\times_T S^2$, where the action of $T$ on $S^2$ 
is determined by the fact that  $H$ acts trivially and $T/H$ acts in the standard way, via rotation
about a diameter of $S^2$: indeed,   the latter $T$-action has the orbit space equal to
$[0,1]$, the singular isotropy groups both equal to $T$, and the regular isotropy group
equal to $H$; one uses \cite[Proposition 1.6]{Hoelscher}.  
Let $\mathfrak{h}$ be the Lie algebra of $H$ and pick $v\in \mathfrak{t}$ such that $\mathfrak{t}=\mathfrak{h} \oplus \RR v$.
Consider the linear function $\alpha : \mathfrak{t} \to \RR$ along with the action of $\ZZ_2=\{1,-1\}$ on $\mathfrak{t}$
given by
$$ \alpha(w+rv) =r, \quad (-1).(w+rv)=w-rv,$$ 
for all $w\in \h$ and $r\in \RR$. We denote the induced $\ZZ_2$-action on $S(\mathfrak{t}^*)$ by
$(-1).f=:\tilde{f},$ for all $f\in S(\mathfrak{t}^*)$.  
Corollary \ref{cor:ifev}  induces the $H^*(BG)=S(\mft^*)^{W(G)}$-algebra isomorphism
$$H^*_G(M) \simeq \{(f,g) \in S(\mathfrak{t}^*)\oplus S(\mathfrak{t}^*) \, : \,  \alpha \ {\rm divides} \ f-g\},$$
and the right hand side is, as an $S(\mft^*)$-algebra, isomorphic to $H^*_T(S^2)$, where the $T$-action on $S^2$ is the one described above. Note that $T/H$ can be embedded as a maximal torus in $SO(3)$, in such a way that the latter group acts canonically on $S^2$ and induces the identification $S^2= (H\times SO(3))/T$. We apply \cite[Theorem 2.6]{GHZ} for this homogeneous space  and deduce that the map $S(\mathfrak{t}^*)\otimes_{S(\mathfrak{t}^*)^{\ZZ_2}}S(\mathfrak{t}^*)\to H^*_G(M)$ given by
$f_1\otimes f_2\mapsto (f_1f_2, f_1\tilde{f}_2)$ is an isomorphism of $S(\mft^*)^{W(G)}$-algebras,
where the structure of $S(\mft^*)^{W(G)}$-algebra on $S(\mathfrak{t}^*)\otimes_{S(\mathfrak{t}^*)^{\ZZ_2}}S(\mathfrak{t}^*)$
is given by inclusion into the first factor. %$f.(f_1\otimes f_2) := (ff_1)\otimes f_2$, for all $f\in H^*(BG) = S({\mathfrak t}^*)^{W(G)}$, $f_1, f_2 \in S({\mathfrak t}^*)$.
By Corollary \ref{cor:ifev} (b), we have the ring isomorphism
$$H^*(M) \simeq \RR \otimes_{S({\mathfrak t}^*)^{W(G)}}(S(\mathfrak{t}^*)\otimes_{S(\mathfrak{t}^*)^{\ZZ_2}}S(\mathfrak{t}^*))
= \left(S({\mathfrak t}^*)/\langle S({\mathfrak t}^*)^{W(G)}_{>0}\rangle\right)\otimes_{S(\mathfrak{t}^*)^{\ZZ_2}}S(\mathfrak{t}^*)
.$$
To obtain descriptions in terms of generators and relations  we need an extra variable $u$ with $\deg u =2$ and also a set of
Chevalley generators \cite{Che} of $S({\mathfrak t}^*)^W$, call them $f_1, \ldots, f_k$, where $k:=\rk G$.
We have: 
$$H^*_G(M)=S({\mathfrak t}^*)\otimes \RR[u]/\langle \alpha^2-u^2\rangle, \quad
H^*(M)=S({\mathfrak t}^*)\otimes \RR[u]/\langle  \alpha^2-u^2, f_1, \ldots, f_k\rangle.$$ 
Let us now consider two concrete situations. For both of them we have  $G=U(3)$ and $T$ is the space of all diagonal matrices in $U(3)$; the role of $H$ is played by the subgroup $\{ {\rm Diag}(1, z_2, z_3) \,: \, |z_2|=|z_3| =1\}$ in the first situation,
respectively  $\{ {\rm Diag}(z_1, z_2, z_3) \, : \, |z_1|=|z_2|=|z_3| =1, z_1z_2z_3=1\}$ in the second.
If we denote the corresponding manifolds by $M_1$ and $M_2$, then
\begin{align*}
{}& H^*(M_1)\simeq \RR[x_1, x_2, x_3,u]/\langle x_1^2-u^2,  x_1+x_2+x_3, x_1x_2+x_1x_3+x_2x_3, x_1x_2x_3\rangle \\
{\rm and} \\
{}& H^*(M_2)\simeq \RR[x_1, x_2, x_3,u]/\langle u^2, x_1+x_2+x_3, x_1x_2+x_1x_3+x_2x_3, x_1x_2x_3\rangle.
 \end{align*}
We observe that, even though $H^*(M_1)$ and $H^*(M_2)$ are isomorphic as groups, by
Proposition \ref{cor:cohom}, they are not isomorphic as rings. Indeed, we  write
\begin{align*}
& H^*(M_1)\simeq \RR[x_1, x_2,u]/\langle x_1^2-u^2,   x_1^2+x_2^2+x_1x_2, x_1x_2(x_1+x_2)\rangle \\
{\rm and}& {} &{} & {} &{} & {} &{} & {} &{} & {} &{} & {} &{} & {} &{}   \\
& H^*(M_2)\simeq \RR[x_1, x_2, u]/\langle u^2, x_1^2+x_2^2+x_1x_2, x_1x_2(x_1+x_2)\rangle
 \end{align*}
 and  note that $H^*(M_1)$ does not contain an element of degree $2$ and order $2$.
\end{ex}

\subsection{The case when  $M$ is odd-dimensional}\label{subsec:misod}
Assume we are given a cohomogeneity one action of a compact connected Lie group $G$ on a compact connected odd-dimensional manifold $M$ such that $M/G=[0,1]$, with at least one isotropy group of maximal rank.
This time the principal orbit $G/H$ is even-dimensional,
hence  the ranks of $G$ and $H$ are congruent modulo $2$. By Proposition \ref{samelson}, the rank of $H$ differs from the singular isotropy ranks by only at most one; consequently, $ \rk H=\rk G$, i.e.~all isotropy groups of the
$G$-action have maximal rank. (Note also that conversely, $\rk H = \rk G$ implies that $M$ is odd-dimensional.)

The applications we will present here concern the Weyl group associated to the cohomogeneity one action of $G$.
To define it, we first pick a $G$-invariant metric on $M$. The corresponding Weyl group $W$ is 
defined as the $G$-stabilizer of the geodesic $\gamma$ modulo $H$.  We have that $W$ is a dihedral group generated by the symmetries of
$\gamma$ at $\gamma(0)$ and $\gamma(1)$. It is finite if and only if $\gamma$ is closed. (For more on this notion, see \cite[Section 1]{GWZ}, \cite[Section 1]{Ziller},
and the references therein.)
In the case addressed in this section we have the following relation between the dimension of the cohomology of $M$ and the order of the occuring Weyl groups:
\begin{cor}\label{cor:oddd} If  $M/G = [0,1]$ and $\rk H = \rk G$,  then $W$ is finite and
\[
\dim H^*(M) = 2 \cdot \frac{\chi(G/H)}{|W|}.
\]
\end{cor}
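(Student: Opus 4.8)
We are in the case $M/G=[0,1]$ with $\rk H=\rk G$, so every isotropy group has maximal rank and the action is Cohen-Macaulay; moreover by Proposition \ref{gr} it is equivariantly formal, since $\Mmax$ is nonempty. Let me recall the invariants in play. The principal orbit $G/H$ is even-dimensional, and since $\rk H=\rk K^\pm=\rk G$, the spheres $K^\pm/H=S^{\ell_\pm}$ are even-dimensional, i.e.\ $\ell_\pm$ are even. The plan is to compute $\dim H^*(M)$ using equivariant formality (so that $\dim H^*(M)$ can be read off from the rank of $H^*_G(M)$ as a free $H^*(BG)$-module, via Proposition \ref{abo}) and then to relate the resulting Euler-characteristic expression to $|W|$.

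**Computing $\dim H^*(M)$.** First I would express $\dim H^*(M)$ through Euler characteristics. Since the action is equivariantly formal, $H^{\rm odd}$ and $H^{\rm even}$ contributions are governed by the even/odd splitting, but the cleanest route is the Poincaré-series identity analogous to Proposition \ref{cor:cohom}. However, here $\ell_\pm$ are even rather than odd, so the Mayer--Vietoris sequence \eqref{eq:MVseq} need \emph{not} split; instead I would work directly with dimensions of cohomology. By \eqref{eq:eulercharacteristic} we have $\chi(M)=\chi(G/K^-)+\chi(G/K^+)-\chi(G/H)$, and since all isotropy groups have maximal rank, each homogeneous space $G/K^\pm$, $G/H$ has vanishing odd cohomology (by Hopf--Samelson, or Borel), so $\chi(G/K^\pm)=\dim H^*(G/K^\pm)=\tfrac{|W(G)|}{|W(K^\pm)|}$ and $\chi(G/H)=\dim H^*(G/H)=\tfrac{|W(G)|}{|W(H)|}$. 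The main computation is to show that $\dim H^*(M)=\chi(G/K^-)+\chi(G/K^+)$, i.e.\ that the alternating contribution of $G/H$ is effectively counted with a sign that doubles the count. I expect this to follow from the long exact Mayer--Vietoris sequence together with the fact that, when $\ell_\pm$ are even, the connecting maps force the dimensions to add rather than subtract; concretely, $\dim H^*(M)=\dim H^*(G/K^-)+\dim H^*(G/K^+)-\dim H^*(G/H)+2\dim(\operatorname{coker})$, and the cokernel of $H^*(BK^-)\oplus H^*(BK^+)\to H^*(BH)$ accounts for exactly $\dim H^*(G/H)$, yielding $\dim H^*(M)=\chi(G/K^-)+\chi(G/K^+)$.

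**Relating to the Weyl group.** The second ingredient is the combinatorial identity relating the Weyl group $W$ of the cohomogeneity-one action to the Weyl groups of the isotropy groups. Since $\gamma$ meets the two singular orbits and $W$ is generated by the two reflections at $\gamma(0),\gamma(1)$, it is a dihedral group whose order is determined by the isotropy data; the standard formula (see \cite{GWZ}, \cite{Ziller}) gives $\tfrac{1}{|W|}=\tfrac{1}{2}\bigl(\tfrac{|W(K^-)|}{|W(G)|}+\tfrac{|W(K^+)|}{|W(G)|}\bigr)^{-1}$-type relation; more usefully I would use that $W$ acts on $G/H$ with the reflection subgroups $W(K^\pm)/W(H)$ playing the role of the two generating reflections, so that $\chi(G/H)/|W|=\tfrac{|W(G)|}{|W(H)|\,|W|}$ simplifies. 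Finiteness of $W$ comes for free here: since $\rk H=\rk G$ forces $\ell_\pm$ even, both singular orbits are present with positive codimension spheres and $\chi(M)>0$ via Proposition \ref{cor:ifm}-type reasoning, which by the classification forces $\gamma$ to be closed, hence $W$ finite.

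**Finishing.** Combining the two computations, I would substitute $\dim H^*(M)=\chi(G/K^-)+\chi(G/K^+)=\tfrac{|W(G)|}{|W(K^-)|}+\tfrac{|W(G)|}{|W(K^+)|}$ and compare with $2\chi(G/H)/|W|=2\tfrac{|W(G)|}{|W(H)|\,|W|}$; the desired equality reduces to the purely group-theoretic identity $\tfrac{1}{|W(K^-)|}+\tfrac{1}{|W(K^+)|}=\tfrac{2}{|W(H)|\,|W|}$. The main obstacle I anticipate is pinning down this last dihedral-group identity precisely: one must identify $|W|$ with the order of the dihedral group generated by the two reflections, and verify that $W(K^\pm)/W(H)$ are exactly the order-two reflection factors, so that $|W(K^-)|\cdot|W(K^+)|$ and $|W(H)|\cdot|W|$ match up correctly; this is where the geometry of the geodesic $\gamma$ and the structure of the isotropy representations must be used carefully, and I expect it to be the technical heart of the argument rather than the cohomological bookkeeping.
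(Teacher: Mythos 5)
Your computation of $\dim H^*(M)$ is where the argument breaks, and the error is not repairable within your framework. Since $M$ is odd-dimensional here (the principal orbit $G/H$ is even-dimensional), $\chi(M)=0$, so Equation \eqref{eq:eulercharacteristic} gives $\chi(G/K^-)+\chi(G/K^+)=\chi(G/H)$. Your claimed identity $\dim H^*(M)=\chi(G/K^-)+\chi(G/K^+)$ would therefore say $\dim H^*(M)=\chi(G/H)$, which is compatible with the statement being proved only when $|W|=2$; equivalently, the ``group-theoretic identity'' $\frac{1}{|W(K^-)|}+\frac{1}{|W(K^+)|}=\frac{2}{|W(H)|\,|W|}$ that you reduce to is, in view of $\frac{1}{|W(K^-)|}+\frac{1}{|W(K^+)|}=\frac{1}{|W(H)|}$, just the assertion $|W|=2$. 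But $|W|$ is not always $2$: by Corollary \ref{cor:kor}, any cohomogeneity one rational homology sphere with $\rk H=\rk G$ (the Berger space, the Brieskorn examples) has $\dim H^*(M)=2$ and $|W|=\chi(G/H)$, which exceeds $2$ in those examples. The step that produces the false identity is the unjustified Mayer--Vietoris bookkeeping: the terms of \eqref{eq:MVseq} are infinite-dimensional $H^*(BG)$-modules, the sequence does not split when $\ell_\pm$ are even, and the connecting homomorphism is not controlled by the kind of cokernel count you propose. Your finiteness argument for $W$ is also broken, since it invokes $\chi(M)>0$, whereas $\chi(M)=0$ in the situation at hand.

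The paper's proof proceeds quite differently, by localization to the fixed point set of a maximal torus $T\subset H$ (which is maximal in $K^\pm$ and $G$). One shows that $M^T$ is a finite disjoint union of closed normal geodesics; in particular $\gamma$ is closed, so $W$ is finite. Each of the $|W|$ regular segments of $\gamma$ meets $G/H$ in exactly one $T$-fixed point, and $W(G)$ permutes the components of $M^T$ transitively, whence $\chi(G/H)=|W|\cdot(\text{number of components of }M^T)$. Since each component is a circle, Proposition \ref{abo} (e) and equivariant formality give $\dim H^*(M)=\dim H^*(M^T)=2\cdot(\text{number of components of }M^T)$, which yields the formula. This passage through $M^T$ is the idea missing from your proposal.
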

\begin{proof} Let $T$ be a maximal torus in $H$, which then is automatically a maximal torus in $K^+$, $K^-$, and $G$. We wish to understand the submanifold $M^T$ consisting of the $T$-fixed points. 
To this end, note that for each $p\in M^T$, the $T$-action on the orbit $Gp$ has isolated fixed points,
hence the $T$-action on the tangent space $T_p(Gp)$
 has no fixed vectors; 
 if $p$ is regular, then $\nu_p (Gp)$ is one-dimensional, hence $T$ must act 
 trivially on it. 
  Therefore, $M^T$ is a finite union of closed one-dimensional totally geodesic submanifolds of $M$, more precisely: a finite union of closed normal geodesics.
 The geodesic $\gamma$ is among them and therefore closed; hence, $W$ is finite.

The order $|W|$ of the Weyl group has the following geometric interpretation, see again 
 \cite[Section 1]{GWZ} or \cite[Section 1]{Ziller}: the image of $\gamma$ intersects the regular part of $M$ (i.e., $M\setminus (G/K^-\cup G/K^+)$) in a finite number of geodesic segments; this number equals $|W|$.  Note that each such 
 geodesic segment intersects the regular orbit $G/H$ in exactly one point,
which is obviously in $(G/H)^T$. Consequently,  $W(G)$ acts transitively on the components of $M^T$, hence each of the components of $M^T$ meets $G/H$ equally often.
 This yields  a bijective correspondence between $(G/H)^T$ and the components of
 $M^T\setminus (G/K^-\cup G/K^+)$.  The cardinality of $(G/H)^T$ is equal to 
 $\chi(G/H)$ and we therefore have
\begin{equation}\label{eq:chi}
\chi(G/H) = |W| \cdot (\text{number of components of }M^T).
\end{equation}
As each component of $M^T$ is a circle and therefore contributes with $2$ to $\dim H^*(M^T)$, we obtain from Proposition \ref{abo} (e) that
\begin{align*}
\dim H^*(M) &= \dim H^*(M^T) \\
& = 2\cdot (\text{number of components of }M^T) \\
&= 2 \cdot \frac{\chi(G/H)}{|W|}.
\end{align*}
\end{proof}

\begin{rem} If   $G\times M \to M$ is a cohomogeneity one action, then different choices of
$G$-invariant metrics on $M$ generally induce different Weyl groups (see for instance \cite[Section 1]{GWZ}).
However, if the action satisfies the extra hypothesis in Corollary \ref{cor:oddd}, then the Weyl group depends only
on the rational cohomology type of $M$ and the principal orbit type of the action. In particular, $W$ 
is independent on the choice of the metric, but this is merely due to the fact that even the normal geodesics do not depend on the chosen $G$-invariant metric (being just the components of $M^T$, 
as  the proof of Corollary \ref{cor:oddd} above shows).
\end{rem}

\begin{rem} In the special situation when $M$ has the rational cohomology of a product of pairwise different spheres,
the result stated in Corollary \ref{cor:oddd} has been proved by P\"uttmann, see \cite[Corollary 2]{Pu2}. 
\end{rem}

Let us now consider the case when 
$M$ is a  rational homology sphere. Examples of cohomogeneity one actions
on such spaces can be found in \cite{GWZ} (see particularly Table E, for linear actions, and Table A for actions on the Berger space $B^7=SO(5)/SO(3)$ and the seven-dimensional spaces  $P_k$);
the Brieskorn manifold $W^{2n-1}(d)$ with $d$ odd and the $SO(2)\times SO(n)$ action defined 
in \cite {HH} is also an example; finally, several of the 7-dimensional $\ZZ_2$-homology spheres that appear in the classification of cohomogeneity one actions on $\ZZ_2$-homology spheres by Asoh \cite{As, As1} are also
rational homology spheres. As usual in this section, we assume that $\rk H = \rk G$.  Under these hypotheses, Corollary \ref{cor:oddd} implies that
 $|W|=\chi(G/H)$. In fact, the latter equation holds under the (seemingly) weaker assumption that
 the codimensions of both singular orbits are odd, as it has been observed in \cite[Section 1]{Pu2}.
Combining this result with Corollary \ref{cor:oddd} we have:

\begin{cor}\label{cor:kor} Let $G$ act on $M$ with cohomogeneity one, such that $M/G = [0,1]$ and let $H$ be a principal isotropy group.
The following statements are equivalent:

(i) $M$ is a rational homology sphere and $\rk H = \rk G$.

(ii) $M$ is a rational homology sphere and the codimensions of both singular orbits are odd.

(iii)  $|W|=\chi(G/H)$.

In any of these cases, the dimension of $M$ is odd and the $G$-action is equivariantly formal.  
\end{cor}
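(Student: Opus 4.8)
The plan is to prove the equivalence of the three statements by exploiting the machinery assembled earlier in the paper, treating Corollary \ref{cor:oddd} and the remark preceding this statement as the main engines. First I would establish the implication (i)$\Rightarrow$(ii): assuming $M$ is a rational homology sphere with $\rk H = \rk G$, the discussion opening Subsection \ref{subsec:misod} already shows that $\rk H = \rk G$ forces $M$ to be odd-dimensional and all isotropy groups to have maximal rank; in particular $\rk K^\pm = \rk G = \rk H$, so each sphere $K^\pm/H = S^{\ell_\pm}$ must be \emph{even}-dimensional by Proposition \ref{samelson} (a), which says exactly that equal rank corresponds to even-dimensional spheres. Since $\ell_\pm = \dim K^\pm - \dim H$ is the dimension of the sphere glued along the normal disk, the codimension of the singular orbit $G/K^\pm$ in $M$ equals $\ell_\pm + 1$, which is then odd. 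This gives (ii), and crucially the argument is reversible: if both codimensions are odd then both $\ell_\pm$ are even, so by Proposition \ref{samelson} we get $\rk K^\pm = \rk H$, whence $\rk H = \rk G$, yielding (ii)$\Rightarrow$(i).

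Next I would connect these to (iii). With (i) in hand, Corollary \ref{cor:oddd} applies directly and gives
\[
\dim H^*(M) = 2\cdot\frac{\chi(G/H)}{|W|}.
\]
Since $M$ is a rational homology sphere of odd dimension, $\dim H^*(M) = 2$ (one class in degree $0$ and one in the top odd degree), so the displayed formula collapses to $|W| = \chi(G/H)$, giving (i)$\Rightarrow$(iii). For the converse direction I would invoke the observation, attributed to \cite[Section 1]{Pu2} in the paragraph preceding the statement, that $|W| = \chi(G/H)$ together with odd codimensions of both singular orbits characterizes the rational homology sphere condition; concretely, feeding $|W| = \chi(G/H)$ back into the computation behind Corollary \ref{cor:oddd} forces the number of components of $M^T$ to equal one, and then $\dim H^*(M) = 2$ via Proposition \ref{abo} (e), which is the statement that $M$ is a rational homology sphere. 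The equivariant formality claim is immediate throughout: the standing hypothesis of the subsection is that at least one isotropy group has maximal rank, so Corollary \ref{cor:eqf} guarantees the action is equivariantly formal.

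The main obstacle I anticipate is the precise bookkeeping in the converse (iii)$\Rightarrow$(i), where one must extract the rational homology sphere conclusion from the numerical coincidence $|W| = \chi(G/H)$ rather than merely verifying it. Corollary \ref{cor:oddd} is stated under the hypothesis $\rk H = \rk G$, so to run it backwards I would first need to argue that (iii) already implies the equal-rank condition; the cleanest route is to note that (iii) can only hold when $\chi(G/H) \neq 0$, which by the standard fact relating positivity of Euler characteristic to equal rank forces $\rk H = \rk G$, thereby landing us in the hypotheses of Corollary \ref{cor:oddd}. Once inside that framework the formula does the rest. The final sentence of the statement then follows by assembling the pieces: $\rk H = \rk G$ implies odd-dimensionality of $M$ by the opening remarks of the subsection, and equivariant formality follows from Corollary \ref{cor:eqf} as noted above.
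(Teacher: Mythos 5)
Your handling of (i)$\Rightarrow$(iii), (iii)$\Rightarrow$(i) and (i)$\Rightarrow$(ii) is sound and is essentially what the paper does: Corollary \ref{cor:oddd} plus $\dim H^*(M)=2$ yields $|W|=\chi(G/H)$, and conversely $\chi(G/H)=|W|>0$ forces $\rk H=\rk G$ by Hopf--Samelson, after which Corollary \ref{cor:oddd} pins $\dim H^*(M)$ down to $2$. The genuine gap is in (ii)$\Rightarrow$(i). From the oddness of both codimensions you correctly get, via Proposition \ref{samelson}, that $\ell_\pm$ is even and hence $\rk K^\pm=\rk H$, i.e.\ that \emph{all isotropy groups have the same rank}; but the jump ``whence $\rk H=\rk G$'' is unjustified. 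Equal isotropy ranks do not imply maximal isotropy rank: take $G=T^2$, $H=\{1\}$, $K^-=K^+=\ZZ_2\subset T^2$. This is a legitimate group diagram ($K^\pm/H=S^0$), both singular orbits have odd codimension $1$ in the resulting $3$-manifold, yet $\rk H=0<2=\rk G$. Your argument for this implication never uses the hypothesis that $M$ is a rational homology sphere, and without that hypothesis the conclusion is false, as the example shows; so the homology-sphere condition must do real work here, and your proposal supplies no mechanism for it. (Note also that you cannot import the standing assumption of Subsection \ref{subsec:misod} that some isotropy group has maximal rank: the corollary is a free-standing statement, and that assumption is precisely part of what conditions (ii) and (iii) are supposed to imply.)

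This is exactly the point where the paper leans on an external input, namely P\"uttmann's observation quoted immediately before the statement: a rational homology sphere whose singular orbits both have odd codimension already satisfies $|W|=\chi(G/H)$, i.e.\ (ii)$\Rightarrow$(iii); combined with your (correct) implication (iii)$\Rightarrow$(i), this closes the cycle of equivalences. If you wish to avoid citing \cite{Pu2}, you must replace ``whence $\rk H=\rk G$'' by an actual argument extracting this from the rational homology sphere hypothesis --- for instance a Mayer--Vietoris computation showing that if all isotropy ranks are equal but smaller than $\rk G$ then $\dim H^*(M)>2$. As written, the direction (ii)$\Rightarrow$(i) (equivalently, (ii)$\Rightarrow$(iii)) is not proved.
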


Let us now observe that for a general cohomogeneity one action with
$M/G = [0,1]$, the Weyl group is contained in $N(H)/H$. 
The previous results allow us to make some considerations concerning whether the extra assumption $\rk G = \rk H$  
implies that   $W=N(H)/H$.  The following example shows that this is not always the case.

\begin{ex} Let us consider the cohomogeneity one action determined by
$G= SU(3)$, $K^-=K^+=S(U(2)\times U(1))$, and $H=T$, a maximal torus in $K^\pm$.
Hoelscher \cite{Ho2} denoted this manifold by $N^7_G$ and showed that it has the same integral
homology as ${\mathbb C}P^2 \times S^3$. This implies that $\dim H^*(N^7_G)= 6$. 
On the other hand, $\chi(G/H) = \chi(SU(3)/T)=6$, hence by Corollary \ref{cor:oddd}, $|W|=2$.
Since $N(H)/H=N(T)/T=W(SU(3))$ has six elements, we have
$W\neq N(H)/H$.
\end{ex} 
 
 However, it has been observed in \cite[Section 5]{GWZ} that for linear cohomogeneity one actions on spheres,
 the condition
 $\rk G = \rk H$  does imply that $W=N(H)/H$. 
The  following more general result is an observation made by P\"uttmann, see \cite[Footnote p.~226]{Pu2}.
We found it appropriate to include a proof of it, which is based on arguments already presented here.

\begin{prop} {\rm (P\"uttmann)} \label{letg} Let $G$ act on $M$ with cohomogeneity one, such that $M/G = [0,1]$ and let $H$ be a principal isotropy group.   
If $\chi(G/H)=|W|$ then $W=N(H)/H$.
\end{prop}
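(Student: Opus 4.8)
We must show that $W = N(H)/H$ given that $\chi(G/H) = |W|$. Since $W$ is always a subgroup of $N(H)/H$, the task is to prove the reverse inclusion, or equivalently that $|N(H)/H| \le |W|$ (the two groups are finite here, because $\chi(G/H) = |W|$ is finite, which forces $\rk H = \rk G$ by the analysis in the odd-dimensional case). The plan is to produce an upper bound on $|N(H)/H|$ that matches $|W|$, and the natural quantity that connects these is $\chi(G/H)$, which already equals $|W|$ by hypothesis.

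\textbf{Relating $N(H)/H$ to $\chi(G/H)$.} Under the hypothesis, $\rk H = \rk G$, so let $T$ be a common maximal torus of $H$ and $G$. The first key step is to recall the classical fact that for equal-rank compact connected groups the cardinality of the $T$-fixed point set of $G/H$ equals the Euler characteristic: $|(G/H)^T| = \chi(G/H) = |W(G)|/|W(H)|$. The second key step is to observe that $N(H)/H$ acts freely on $(G/H)^T$. Indeed, $N(H)/H$ acts on $G/H$ on the right, this action commutes with the left $T$-action, hence it preserves $(G/H)^T$; and since an element of $N(H)$ fixing a coset $gH \in (G/H)^T$ would have to lie in $gHg^{-1} \cap N(H)$ in a way forcing it into $H$, the action is free. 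Consequently $|N(H)/H|$ divides $|(G/H)^T| = \chi(G/H)$, which gives
\[
|N(H)/H| \;\le\; \chi(G/H) \;=\; |W|.
\]

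\textbf{Concluding.} Combining $|N(H)/H| \le |W|$ with the trivial inclusion $W \subseteq N(H)/H$ (so $|W| \le |N(H)/H|$), we get equality of orders and hence $W = N(H)/H$, as desired.

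\textbf{Main obstacle.} The step I expect to require the most care is the freeness of the $N(H)/H$-action on $(G/H)^T$, together with the precise identification $|(G/H)^T| = \chi(G/H)$ for equal-rank pairs; one must justify that a $T$-fixed coset is of the form $nH$ with $n \in N(T)$ (using that $T$-fixed points of $G/H$ correspond to $N(T)$-cosets modulo $W(H)$), and then check that distinct elements of $N(H)/H$ move such cosets to distinct cosets. The Euler-characteristic count itself is standard (Hopf--Samelson, or the Weyl-group formula), so the real content lies in setting up the free action cleanly.
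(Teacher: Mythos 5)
Your proof is correct, but it takes a genuinely different route from the paper's. The paper argues geometrically on $M$: from the identity $\chi(G/H)=|W|\cdot(\text{number of components of }M^T)$ established in the proof of Corollary \ref{cor:oddd}, the hypothesis forces $M^T$ to be a single closed geodesic $\gamma$; since $H$ fixes $\gamma$ pointwise, every element of $N(H)$ preserves $M^H=M^T=\gamma$ and hence represents an element of $W$, giving the nontrivial inclusion $N(H)/H\subseteq W$ directly. You instead run a counting argument entirely on the homogeneous space $G/H$: the right action of $N(H)/H$ on the finite set $(G/H)^T$ is free, so $|N(H)/H|$ divides $|(G/H)^T|=\chi(G/H)=|W|$, and combined with the standard inclusion $W\subseteq N(H)/H$ this forces equality. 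Two remarks. First, the step you flag as the main obstacle is actually immediate: $gnH=gH$ if and only if $n\in H$, so the right $N(H)/H$-action is free on all of $G/H$, not just on the fixed-point set; there is no need to analyze $gHg^{-1}\cap N(H)$. The only point requiring any care is that $(G/H)^T$ is finite of cardinality $\chi(G/H)$ (which follows from $\chi(X)=\chi(X^T)$ and the conjugacy of maximal tori of $H$ under $H_0$, valid for non-connected $H$), and that it is nonempty (clear, as $T\subseteq H$). Second, your argument buys a little more than the paper's in one direction: it shows that for any such action with $\rk H=\rk G$ one has the divisibility chain $|W|\mid |N(H)/H|\mid \chi(G/H)$, which refines the paper's Equation \eqref{eq:chi}; on the other hand, the paper's proof identifies $N(H)/H$ inside $W$ concretely as symmetries of the normal geodesic, which is more geometric information than a cardinality count.
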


\begin{proof} We only need to show that $N(H)/H$ is contained in $W$.
The hypothesis $\chi(G/H) >0$ implies that $\rk G = \rk H$, hence $G/H$ is even-dimensional and
$M$ is odd-dimensional. Let $T$ be a maximal torus in $H$. 
By Equation (\ref{eq:chi}), $M^T$ consists of a single closed geodesic $\gamma$.
The group $H$ fixes the geodesic $\gamma$ pointwise.  
Consequently, any element of $N(H)$ leaves $M^T$ invariant, hence its
coset modulo $H$ is an element of $W$. 
\end{proof}

The following  example shows that the condition $\chi(G/H)=|W|$ is stronger than $W=N(H)/H$.

\begin{ex} In general, $\rk G = \rk H$ and $W=N(H)/H$ do not imply that $\chi(G/H)=|W|$. 
To see this we consider the cohomogeneity one manifold determined by
$G=Sp(2)$, $K^-=K^+= Sp(1) \times Sp(1)$, $H=Sp(1)\times SO(2)$.
Its integer homology has been determined in \cite[Section 2.9]{Ho2}:
the manifold, denoted there by $N^7_I$, has the same homology as the product $S^3\times S^4$.  
Consequently, $\dim H^*(N^7_I) = 4$. An easy calculation shows that
$\chi(G/H) = |W(Sp(2))|/(|W(Sp(1))|\cdot |W(SO(2))|) = 4$. By Corollary \ref{cor:oddd}, $|W|=2$,
hence $\chi(G/H)\neq |W|$. On the other hand, the group  $N(H)$ can be determined explicitly
as follows. We regard $Sp(2)$ as the set of all $2\times 2$ quaternionic matrices $A$ with the property that $A\cdot A^*=I_2$
and $H=Sp(1) \times SO(2)$ as the subset consisting of all diagonal matrices ${\rm Diag}(q, z)$
with $q\in {\mathbb H}$, $z\in {\mathbb C}$, $|q|=|z| =1$. Then $N(H)$ is the set of all diagonal
matrices ${\rm Diag}(q,r)$, where $q, r\in {\mathbb H}$, $|q|=|r|=1$, $r\in N_{Sp(1)}(SO(2))$.
This implies that $N(H)/H \simeq N_{Sp(1)}(SO(2))/SO(2) =W(Sp(1))={\mathbb Z}_2$, hence
$N(H)/H=W$.
\end{ex}

\subsection{Torsion in the equivariant cohomology of cohomogeneity one actions}  \label{torsfre}
It is known \cite{Fr-Pu2} that if $G\times M \to M$ is an arbitrary group action, then the
$H^*(BG)$-module $H^*_G(M)$ may be torsion-free without being free. (But note that this cannot happen if $G$ is either the circle $S^1$ or the two-dimensional torus $T^2$, see \cite{Al}.) The goal of this subsection is to show that this
phenomenon also
cannot occur if the cohomogeneity of the action is equal to one. This is a consequence of the following lemma.

\begin{lem} Assume that the action of the compact connected Lie group $G$ on the compact connected manifold $M$ 
is Cohen-Macaulay. Then the $H^*(BG)$-module $H^*_G(M)$ is free if and only if it is torsion-free.
\end{lem}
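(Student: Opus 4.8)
The plan is to prove the nontrivial direction: assuming the Cohen-Macaulay module $H^*_G(M)$ is torsion-free, show it is in fact free. The key idea is that for a graded module over the graded polynomial-type ring $R := H^*(BG)$, freeness can be detected via depth and the Auslander-Buchsbaum formula, so I would work with the relationship between depth, Krull dimension, and projective (equivalently, free) dimension. Since $R = S(\mathfrak{t}^*)^{W(G)}$ is a polynomial ring in $\rk G$ variables (by the Chevalley-Shephard-Todd theorem, as recorded in Remark \ref{assertion}), it is a graded *local regular ring, so every finitely generated graded projective module is free, and the homological machinery applies cleanly.

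First I would set $r := \rk G = \dim R$ and recall that Cohen-Macaulayness means $\depth_R H^*_G(M) = \dim_R H^*_G(M)$. The plan is to show that torsion-freeness forces this common value to equal $r$, the full dimension of $R$. Indeed, the annihilator of a torsion-free module is zero, so $\dim_R H^*_G(M) = \dim R = r$; hence by Cohen-Macaulayness $\depth_R H^*_G(M) = r$ as well. Then I would invoke the graded Auslander-Buchsbaum formula, which gives
\[
\mathrm{pd}_R H^*_G(M) + \depth_R H^*_G(M) = \depth R = r,
\]
where $\mathrm{pd}$ denotes projective dimension. Substituting $\depth_R H^*_G(M) = r$ yields $\mathrm{pd}_R H^*_G(M) = 0$, i.e.\ the module is projective, and since $R$ is graded *local, projective implies free. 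This completes the argument, and the easy direction (free implies torsion-free, since $R$ is an integral domain) is immediate.

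The main obstacle I anticipate is the bookkeeping about the graded *local setting rather than any deep idea: I must ensure that the Auslander-Buchsbaum formula is being applied in its correct graded form over a graded *local ring, and that $R = H^*(BG)$ really is regular so that the vanishing of projective dimension yields freeness. The regularity of $R$ follows from $H^*(BG) \cong S(\mathfrak{t}^*)^{W(G)}$ being a polynomial algebra (Remark \ref{assertion}), hence a graded *local regular ring of dimension $r$. One subtlety worth checking is that the claim $\dim_R H^*_G(M) = r$ under torsion-freeness uses that $H^*_G(M)$ is finitely generated (which holds, as $M$ is compact) and that a nonzero torsion-free module over a domain has zero annihilator, so its support is all of $\operatorname{Spec} R$ and its dimension is $\dim R$; this is exactly the kind of fact I would cite from \cite{BrunsHerzog} rather than reprove. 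Once these structural points are in place, the chain ``torsion-free $\Rightarrow$ full dimension $\Rightarrow$ full depth $\Rightarrow$ projective dimension zero $\Rightarrow$ free'' runs without further difficulty.
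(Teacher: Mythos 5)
Your argument is correct, but it takes a genuinely different route from the paper's. The paper disposes of the lemma in two lines by citing two results of \cite{GR} that are topological in flavour: torsion-freeness of $H^*_G(M)$ forces $\Mmax\neq\emptyset$ (\cite[Theorem 3.9 (2)]{GR}), and a Cohen-Macaulay action with $\Mmax\neq\emptyset$ is equivariantly formal (Proposition \ref{gr}). You instead argue purely within commutative algebra: torsion-freeness of the nonzero module $H^*_G(M)$ over the domain $H^*(BG)$ gives vanishing annihilator, hence Krull dimension equal to $\dim H^*(BG)=\rk G$; Cohen-Macaulayness upgrades this to $\depth_{H^*(BG)}H^*_G(M)=\rk G$; and the graded Auslander--Buchsbaum formula over the regular graded *local ring $H^*(BG)\simeq S(\mft^*)^{W(G)}$ then forces projective dimension zero, hence freeness. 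All the ingredients you need are available in the paper's setup (finite generation by Quillen's theorem, the polynomial-ring description of $H^*(BG)$ as in Remark \ref{assertion}), and each step is sound, including the easy converse since $H^*(BG)$ is a domain. What your approach buys is independence from the topological input: you are really proving the general algebraic statement that a finitely generated torsion-free Cohen-Macaulay graded module over a graded polynomial ring is free, with no reference to fixed points or isotropy ranks. What the paper's route buys is brevity and coherence with the framework it has already built, in which freeness is governed by whether $\Mmax$ is nonempty; the commutative algebra you spell out is essentially what is hidden inside the cited results of \cite{GR}.
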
 

\begin{proof} If $H^*_G(M)$ is a torsion-free $H^*(BG)$-module, then, by \cite[Theorem 3.9 (2)]{GR},
the space $\Mmax$ is non-empty. Since the $G$-action is Cohen-Macaulay, it must be equivariantly formal by Proposition \ref{gr}.
\end{proof}

From Theorem \ref{thm:cohom1eqformal} we deduce:

\begin{cor} Assume that the action $G\times M \to M$ has cohomogeneity one.
Then  the $H^*(BG)$-module $H^*_G(M)$ is free if and only if it is torsion-free.
\end{cor}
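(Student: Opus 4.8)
The plan is to reduce the corollary directly to the preceding lemma. First I would note that having cohomogeneity one is exactly the hypothesis of Theorem \ref{thm:cohom1eqformal}, which asserts that any such action is Cohen-Macaulay. Thus the action $G\times M \to M$ satisfies the hypothesis of the lemma immediately above, namely that it is a Cohen-Macaulay action of a compact connected Lie group $G$ on a compact connected manifold $M$.

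Once this identification is made, the lemma supplies precisely the desired equivalence: under the Cohen-Macaulay assumption, the $H^*(BG)$-module $H^*_G(M)$ is free if and only if it is torsion-free. Since freeness always implies torsion-freeness, the only content is the reverse implication, and that is exactly what the lemma provides by combining \cite[Theorem 3.9 (2)]{GR} (torsion-freeness forces $\Mmax\neq\emptyset$) with Proposition \ref{gr} (Cohen-Macaulay together with $\Mmax\neq\emptyset$ gives equivariant formality, i.e.\ freeness).

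The only point requiring a word of care is the standing connectivity and compactness hypotheses. The lemma is stated for a compact connected $G$ acting on a compact connected $M$, whereas the corollary speaks only of a cohomogeneity one action of a compact $G$. In the setting of this paper these assumptions are in force throughout, so no genuine obstacle arises; one simply records that the ambient hypotheses of Section \ref{one} are retained. There is essentially no hard part: the corollary is a formal consequence of the lemma applied to the class of actions certified Cohen-Macaulay by Theorem \ref{thm:cohom1eqformal}.
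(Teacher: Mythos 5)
Your proposal is correct and is exactly the paper's argument: Theorem \ref{thm:cohom1eqformal} certifies that the cohomogeneity one action is Cohen-Macaulay, and the preceding lemma then yields the equivalence of freeness and torsion-freeness. Your remark about the standing compactness and connectedness hypotheses is a reasonable point of care but changes nothing of substance.
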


\end{document}